\documentclass[11pt,twoside]{article}
\usepackage{latexsym}
\usepackage{amssymb,amsbsy,amsmath,amsfonts,amssymb,amscd}
\usepackage{graphicx,color}
\usepackage{float,url}
\usepackage{cancel}
\usepackage[colorlinks=true, citecolor =blue]{hyperref}
\setlength{\oddsidemargin}{0mm}
\setlength{\evensidemargin}{0mm}
\setlength{\topmargin}{5mm}
\setlength{\textheight}{22cm}
\setlength{\textwidth}{17cm}
\parindent10pt
\newcommand  \ind[1]  {   {1\hspace{-1.2mm}{\rm I}}_{\{#1\} }    }
\newcommand {\wt}[1] {{\widetilde #1}}
\newcommand{\commentout}[1]{}
\newcommand{\R}{\mathbb{R}}

\newcommand{\tx}{\wt x}
\newcommand{\tz}{\wt z}

\newcommand {\e}  {\varepsilon}

\newcommand {\sg} {\sigma}
\newcommand {\vp} {\varphi}
\newcommand {\vr} {\varrho}

\newcommand {\Chi} {{\bf \raise 2pt \hbox{$\chi$}} }

\newcommand {\sgn} { {\rm sgn} }

\newcommand {\cJ} { {\mathcal J} }

\newcommand {\cP} { {\mathcal P} }
\newcommand {\cT} { {\mathcal T} }

\newcommand{\dd}{{\rm d}}
\newcommand{\intot}{\int_0^t}
\newcommand {\f}   {\frac}
\newcommand {\p}   {\partial}
\newcommand{\dis}{\displaystyle}

\newcommand{\beq}{\begin{equation}}
\newcommand{\eeq}{\end{equation}}
\newcommand{\bea} {\begin{array}{rl}}
\newcommand{\eea} {\end{array}}
\newcommand{\bepa}{\left\{ \begin{array}{l}}
\newcommand{\eepa} {\end{array}\right.}

\newcommand{\intt}{\int \hskip-8pt \int}
\newcommand{\inttt}{\int \hskip-8pt \int \hskip-8pt \int}
\newcommand{\intttt}{\int \hskip-8pt \int \hskip-8pt \int \hskip-8pt \int}

\newcommand{\vip}{\vskip0.15cm}

\newtheorem{theorem}{Theorem}
\newtheorem{lemma}[theorem]{Lemma}
\newtheorem{definition}[theorem]{Definition}

\newcommand{\qed}{{\hfill $\square$}}

\title{A non-expanding transport distance for some structured equations}

\author{
Nicolas Fournier\thanks{Sorbonne Universit\'{e}, CNRS, Laboratoire de Probabilit\'e, Statistique et Mod\'elisation, F-75005 Paris, France. Email: nicolas.fournier@sorbonne-universite.fr}
\and 
Beno\^\i t Perthame\thanks{Sorbonne Universit\'{e}, CNRS, Universit\'{e} de Paris, Inria, Laboratoire Jacques-Louis Lions, F-75005 Paris, France. 
Email: benoit.perthame@sorbonne-universite.fr. }
\thanks{B.P. has received funding from the European Research Council (ERC) under the European Union's Horizon 2020 research and innovation programme (grant agreement No 740623). }
}
\date{\today}

\begin{document}
\maketitle
\pagestyle{plain}
\pagenumbering{arabic}

\begin{abstract}
Structured equations are a standard modeling tool in mathematical biology. They are
integro-differential equations where the unknown depends on one or several variables, 
representing the state or phenotype of individuals. A large literature has been devoted to many 
aspects of these equations and in particular to the study of measure solutions.
Here we introduce a transport distance closely related to the Monge-Kantorovich distance,
which appears to be non-expanding for several (mainly linear) examples of structured equations.
\end{abstract} 

\noindent{\makebox[1in]\hrulefill}\newline
2010 \textit{Mathematics Subject Classification:} 35A05; 35K55; 60J99; 28A33 
\newline\textit{Keywords and phrases:} Transport distances; Monge-Kantorovich distance; Coupling; 
Structured equations; Mathematical biology.

%
\section*{Introduction}

The subject of structured equations arises in several areas of biology and extends ordinary differential 
equations by including parameters chosen because they bring some influence on the population dynamics, 
see \cite{CushingBook, MD_LN68, P_birkhauser}. This leads to various integro-differential equations 
and partial differential equation (P.D.E.)
which also appear in many other areas as physics, communication science and industry.  Besides the interesting 
modeling issues, the questions which have been considered are about  existence of solutions, entropy properties and, mostly, long 
term convergence to steady states, with possibly exponential rate of convergence.  
Another question concerns  measure solutions, possibly after  renormalization \cite{BCG2020, DDGW2019}. 
Furthermore, in the context of a nonlinear neuroscience problem, convergence of a particle system has recently 
been proved using transport costs  with specific costs precisely adapted to the  coefficients~\cite{FL2016}.

\vip

The present papers aims at showing that a simple variant of the Monge-Kantorovich transport distance appears to be non-expanding along several structured equations. These include the renewal equation and a few other models listed below.

\vip

We work in a state space that we denote by  $\cJ$, which can be $[0,\infty)$, $[0,\infty)\times T$ 
with $T$ a discrete torus, 
$[0,\infty)^2$ or $[0,\infty)\times \R^d$ and we always use a cost function 
$\varrho:\cJ \times\cJ\mapsto [0,\infty) $ which satisfies
$\varrho (x,x)=0$ and $\varrho (x,y)=\varrho (y,x)>0$ for $x \neq y$ and will typically take the form $\varrho (y,x)= \min(|x-y|, a)$ for some $a>0$ chosen according to the equation at hand. We recall that 
for two probability measures $u_1, u_2 \in \cP(\cJ)$, the transport cost is defined as 
\beq\bepa
\cT_{\varrho}(u_1, u_2) = \dis \inf_{v \in {\mathcal H}(u_1,u_2)} \intt \varrho(x,y) v(\dd x,\dd y), \\[10pt]
{\mathcal H}(u_1,u_2)= \{v \in \cP(\cJ\times\cJ) \quad \hbox{with marginals} \quad u_1 \; \hbox{and}\; u_2\}.
\eepa
\label{eq:wasserstein} \eeq
When $\varrho$ is a distance on $\cJ$, $\cT_{\varrho}$ is a distance on $\cP(\cJ)$ and, with a slight 
abuse of language,  one refers
to the Monge-Kantorovich distance.
Recent accounts about the theory can be found in the books \cite{VTOT, AGSbook, Santambrogio}.

\vip
Our approach relies on the coupling method, see the survey paper \cite{FoPe2021}. 
The first example of use of the coupling method, to our knowledge, can be traced back to 
Dobrushin \cite{Dob}, where the nonlinear
Vlasov equation is derived as mean-field limit of a deterministic system of interacting particles,
making use of some transport cost.
No P.D.E. is written for the coupling in \cite{Dob}, because everything may be expressed in terms of 
characteristics. See \cite[Section 3]{GoMoPa} for a P.D.E. analogue to Dobrushin's argument. 
In the same spirit, the Euler equation is derived from a deterministic system of interacting vortices 
in Marchioro-Pulvirenti \cite[Section 5.3]{MaPu}, using also a coupling argument, see also \cite{Ha} 
for a result with the strong 
transport distance $d_\infty$.
\vip

The paper is organized as follows. We begin with the renewal equation in order to present in 
details the results and method. Building on this, we extend the method to a system of renewal equations, 
some space-age structured equation, the multi-time renewal equation, the growth-fragmentation equation,
and to an age-size coupled model. All these equations are linear. We complete our study with a model 
with sexual reproduction, which is quadratic, and generates new difficulties. Our setting is very 
general and does not use uniqueness of solutions, therefore we complete them with a technical appendix 
devoted to a uniqueness result by the Hilbert duality method when further regularity on the coefficients 
is assumed.

\section{The renewal equation}
\label{sec:RenewalEq}

Our first  example, also the simpler, is the general renewal equation. It allows to introduce the method and to explain the choice of cost within the setting of the equation
\beq \bepa
 \f{\p u_t(x) }{\p t} + \f{\p[ g(x) u_t(x)] }{\p x} +d(x) u_t(x) = b(x) N(t) , \qquad t\geq 0, \, x\geq 0,
\\[5pt]
\dis  u_t(x=0)=0, \qquad  N(t)= \int_0^\infty d(x) u_t(\dd x), \qquad t\geq 0.
\eepa 
\label{renewalEq}\eeq
When $b=\delta_0$, the Dirac mass at $0$, and $ g\equiv 1$, we find the classical renewal equation~\cite{feller}. 
The more general version at hand is motivated by various models proposed in mathematical neuroscience, 
\cite{FL2016, MW2018, PPS2010}.
We assume that
\beq \label{renewMKas1}
g, d \in C([0,\infty)), \quad b \in \cP([0, \infty)), \quad \hbox{g is non-increasing}, \quad 
g(0)\geq 0, \quad d\geq 0.
\eeq
We will further suppose that 
\beq 
\exists\; a>0  \quad \text{such that} \quad a \leq  \inf_{|x-y| \leq a} \frac{|x-y|\max(d(x),d(y))}{|d(x)-d(y)|} .
\label{renewMKas2}
\eeq
Observe that this last condition holds true with $a=\min(a_0,1)$ as soon as $a_0>0$, where
\[
a_0= \inf_{| x-y| \leq 1} \frac{|x-y|\max(d(x),d(y))}{|d(x)-d(y)|}.
\]
For example, $d(x)=\alpha+\beta x^p$ satisfies such a condition, provided $\alpha>0$, $\beta\geq 0$
and $p\geq 1$, as well as any Lipschitz and uniformly positive function.

\begin{theorem}
Assume \eqref{renewMKas1}-\eqref{renewMKas2}. We consider the cost function on $(0,\infty)\times (0,\infty)$ defined by
$$
\varrho(x,y) = \min( a, | x-y|).
$$ 
For any $u_0^1,u_0^2 \in \cP([0,\infty))$, there exists a pair of weak measure solutions
$(u_t^1)_{t\geq 0},(u_t^2)_{t\geq 0} \subset \cP([0,\infty))$ to \eqref{renewalEq} starting from $u_0^1$ and
$u_0^2$, i.e., such that for $i=1,2$ and all $t\geq 0$,
\begin{gather}
\int_0^t  \hskip-5pt  \int_0^\infty  \hskip-5pt  d(x)u_s^i(\dd x)\dd s<\infty \label{ww1}
\end{gather}
and for all $t\geq 0$, all $\varphi \in C^1_c([0,\infty))$,
\begin{gather}
\int_0^\infty  \hskip-5pt  \varphi(x)u_t^i(\dd x) = \int_0^\infty \!\varphi(x)u_0^i(\dd x) + \intot \hskip-5pt \int_0^\infty \!
\Big[g(x)\varphi'(x)+ d(x)\int_0^\infty(\varphi(z)-\varphi(x))b(\dd z)  \Big] u_s^i(\dd x)\dd s.  \label{ww2}
\end{gather}
Moreover,  for all $t\geq 0$, we have
$$
{\mathcal T}_\varrho (u^1_t,u^2_t) \leq {\mathcal T}_\varrho(u^1_0, u^2_0).
$$
\label{th:tc:bm}
\end{theorem}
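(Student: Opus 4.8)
The plan is to use the coupling method, building the two solutions together with a coupling. We seek a curve $(v_t)_{t\ge 0}\subset \cP([0,\infty)\times[0,\infty))$ whose first and second marginals, $(u^1_t)$ and $(u^2_t)$, solve \eqref{renewalEq} in the sense of \eqref{ww1}--\eqref{ww2}, and such that the transport cost $t\mapsto\intt\varrho(x,y)\,v_t(\dd x,\dd y)$ is non-increasing. Concretely, $(v_t)$ should solve, weakly, the linear equation whose generator acts on $\Phi\in C^1_c([0,\infty)\times[0,\infty))$ by
\begin{align*}
\caL\Phi(x,y) ={}& g(x)\p_x\Phi(x,y)+g(y)\p_y\Phi(x,y)+\min(d(x),d(y))\int_0^\infty\big(\Phi(z,z)-\Phi(x,y)\big)b(\dd z)\\
&{}+(d(x)-d(y))_+\int_0^\infty\big(\Phi(z,y)-\Phi(x,y)\big)b(\dd z)+(d(y)-d(x))_+\int_0^\infty\big(\Phi(x,z)-\Phi(x,y)\big)b(\dd z);
\end{align*}
that is, each coordinate is transported along the characteristics of $g$, and the death/redistribution events are coupled by the usual splitting of the rates: at rate $\min(d(x),d(y))$ both coordinates jump \emph{to the same point} $z\sim b$, while at rate $(d(x)-d(y))_+$ (resp.\ $(d(y)-d(x))_+$) only the first (resp.\ second) coordinate jumps. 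Testing $\caL$ against $\Phi(x,y)=\varphi(x)$ returns exactly $g(x)\varphi'(x)+d(x)\int_0^\infty(\varphi(z)-\varphi(x))b(\dd z)$, because $\min(d(x),d(y))+(d(x)-d(y))_+=d(x)$; symmetrically in $y$. Hence the marginals of any such $(v_t)$ solve \eqref{ww2}, and total mass is preserved (test with $\varphi\equiv1$), so they stay in $\cP$. Existence of $(v_t)$ — equivalently, of the pair of solutions — is then obtained by a standard argument, e.g.\ via the piecewise-deterministic Markov process associated with $\caL$, or by regularising $g,d$ and passing to the limit.

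The core of the argument is the pointwise inequality $\caL\varrho\le0$ for $\varrho(x,y)=\min(a,|x-y|)$. \emph{Transport part.} Writing $\varrho(x,y)=\eta(x-y)$ with $\eta(r)=\min(a,|r|)$, one has, away from the negligible set $\{x=y\}\cup\{|x-y|=a\}$, $\p_x\varrho=\sgn(x-y)\,\ind{|x-y|<a}=-\p_y\varrho$, so the transport part equals $(g(x)-g(y))\,\sgn(x-y)\,\ind{|x-y|<a}\le0$ because $g$ is non-increasing — this is where that hypothesis enters. \emph{Jump part.} Since $\varrho(z,z)=0$, the common-jump term is $-\min(d(x),d(y))\,\varrho(x,y)\le0$. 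Using $0\le\varrho(z,y)\le a$, $0\le\varrho(x,z)\le a$ and $b\in\cP([0,\infty))$, the two single-jump terms are at most $(d(x)-d(y))_+(a-\varrho(x,y))$ and $(d(y)-d(x))_+(a-\varrho(x,y))$, and at most one of them is nonzero, so their sum is $\le|d(x)-d(y)|\,(a-\varrho(x,y))$. If $|x-y|\ge a$, then $\varrho(x,y)=a$ and already $\caL\varrho\le-\min(d(x),d(y))\,a\le0$. If $|x-y|\le a$, then $\varrho(x,y)=|x-y|$ and assumption \eqref{renewMKas2} gives $a\,|d(x)-d(y)|\le|x-y|\max(d(x),d(y))$, whence, using $\max(p,q)-|p-q|=\min(p,q)$,
$$
|d(x)-d(y)|\,(a-|x-y|)\le|x-y|\big(\max(d(x),d(y))-|d(x)-d(y)|\big)=|x-y|\,\min(d(x),d(y)),
$$
so the single-jump bound is compensated by the common-jump term and $\caL\varrho(x,y)\le0$.

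It then follows that $\f{\dd}{\dd t}\intt\varrho(x,y)\,v_t(\dd x,\dd y)=\intt\caL\varrho(x,y)\,v_t(\dd x,\dd y)\le0$. Starting from $v_0$ an optimal coupling of $u^1_0$ and $u^2_0$ — which exists since $\varrho$ is bounded and continuous and the state space is Polish — we conclude, for all $t\ge0$,
$$
\cT_\varrho(u^1_t,u^2_t)\;\le\;\intt\varrho(x,y)\,v_t(\dd x,\dd y)\;\le\;\intt\varrho(x,y)\,v_0(\dd x,\dd y)\;=\;\cT_\varrho(u^1_0,u^2_0),
$$
the first inequality because $v_t\in\mathcal{H}(u^1_t,u^2_t)$. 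The main obstacle is not this algebraic computation — which is short, and pinpoints exactly where ``$g$ non-increasing'' and \eqref{renewMKas2} are needed — but the technical work around it: constructing $(v_t)$ with the prescribed marginals when $g,d$ are merely continuous and no uniqueness is available (controlling accumulation of jumps, the behaviour at $x=0$ and at infinity on the non-compact state space, and the integrability \eqref{ww1}), and justifying the differentiation of $t\mapsto\intt\varrho\,v_t$ when $\varrho$ is only Lipschitz. The latter is handled by replacing $\varrho$ with $C^1$ functions $\varrho_\e\le\varrho$ that are non-decreasing in $|x-y|$ and bounded by $a$: the transport part of $\caL\varrho_\e$ then retains the sign $\le0$, and the jump part is $\le\max(d(x),d(y))\,(\varrho-\varrho_\e)$, an error that integrates to $0$ as $\e\to0$ by dominated convergence in view of \eqref{ww1}.
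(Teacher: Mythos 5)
Your proposal follows the same strategy as the paper: the same coupled generator (common jump at rate $\min(d(x),d(y))$ to a common $z\sim b$, single jumps at rates $(d(x)-d(y))_+$ and $(d(y)-d(x))_+$), the same check that the transport part of $\caL\varrho$ is $\le 0$ because $g$ is non-increasing, and the same use of $\int\varrho(z,\cdot)\,b(\dd z)\le a$ together with \eqref{renewMKas2} to close the jump part; your rearrangement via $\max-|\cdot|=\min$ is algebraically identical to the paper's inequality $I(x,y)\le\varrho(x,y)$. The only place where you are lighter than the paper is the existence of $(v_t)$, for which the paper carries out an explicit de la Vall\'ee Poussin tightness bound with $h$ smooth, $0\le h'\le 1$; you flag this as standard but do not supply the estimate.
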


Notice that, even if we did not mention it, it follows from \eqref{ww2} that $u\in C_{\rm w} ([0, \infty); \cP([0,\infty))$. 
Also, the generality of this statement  relies on the price that the solutions $u^1_t, \, u^2_t$ may 
depend on the choice of the pair $(u_0^1,u_0^2)$. However, when $g\in C^1_b$,  the solutions are unique 
in distributional sense as proved in the appendix, and then the result is more standard. 
The regularity of $g$ can certainly be lowered in view of the theory developed in  
\cite{bianchiniG2011,BJM2005,dpl}.
\vip

\begin{proof} We assume \eqref{renewMKas1}, fix $u^1_0,u^2_0 \in \cP([0,\infty))$ and consider any 
$v_0 \in {\mathcal H}(u^1_0,u^2_0)$.
There exists a family $(v_t)_{t\geq 0}$ of probability measures on $[0,\infty)^2$, starting from $v_0$, such that for all $t\geq 0$,
\begin{align}\label{aa1}
\intot \hskip-5pt\intt [d(x)+d(y)]v_s(\dd x,\dd y) \dd s <\infty,
\end{align}
and which weakly solves
\begin{align*}
 \f{\p v_t }{\p t} &+ \f{\p   [g(x) v_t]}{\p x} +   \f{\p  [g(y) v_t]}{\p y} + \max (d(x), d(y)) v_t  =  \dis b(x) \delta (x- y) \intt \min(d(x') ,d(y'))  \; v_t(\dd x',\dd y')
\\
&+b(x) \int \big( d(x')- d(y) \big)_+ \; v_t(\dd x',y)  
\dis+b(y) \int \big( d(y')-  d(x) \big)_+ \; v_t(x,\dd y') .
\end{align*}
This means that for all $t\geq 0$, all  $\vp \in C^1_c([0,\infty)^2)$,
\begin{align}
\intt \vp(x,y)v_t(\dd x,\dd y) =& \intt \vp(x,y)v_0(\dd x,\dd y)
+ \intot  \hskip-5pt \intt \Big[ g(x) \f{\p \vp(x,y)}{\p x}  +g(y) \f{\p   \vp(x,y) }{\p y} \Big] v_s(\dd x,\dd y) \dd s\notag\\
&+\intot  \hskip-5pt \intt  \hskip-8pt \int \big[\varphi(z,z)-\varphi(x,y) \big] \min(d(x),d(y))b(\dd z)
v_s(\dd x,\dd y)\dd s\notag\\
& +\intot  \hskip-5pt \intt  \hskip-8pt \int \big[\varphi(z,y)-\varphi(x,y) \big] (d(x)-d(y))_+ b(\dd z)
v_s(\dd x,\dd y)\dd s \notag\\
&+\intot  \hskip-5pt \intt  \hskip-8pt \int \big[\varphi(x,z)-\varphi(x,y) \big] (d(y)-d(x))_+b(\dd z)
v_s(\dd x,\dd y)\dd s.
\label{abcd}
\end{align}
Using \eqref{aa1} and then \eqref{abcd} with a function $\vp$ depending only on $x$, observing that
$v_0\in {\mathcal H}(u^1_0,u^2_0)$ and that
$$
\min(d(x),d(y))+(d(x)-d(y))_+=d(x),
$$
we deduce that the first marginal $u^1_t(\dd x)=\int_{y\in [0,\infty)} v_t(\dd x,\dd y)$ satisfies
\eqref{ww1}-\eqref{ww2}. Similarly, the second marginal $u^2_t(\dd y)=\int_{x\in [0,\infty)} v_t(\dd x,\dd y)$ 
satisfies \eqref{ww1}-\eqref{ww2}. 
And it holds that $v_t \in {\mathcal H}(u^1_t,u^2_t)$ for all $t\geq 0$.
\vip
The existence for \eqref{aa1}-\eqref{abcd} follows from classical arguments, 
using e.g., an approximate problem where $g,d$ are replaced by smooth and bounded functions, 
and from the following {\it a priori} tightness estimate.
By the de la Vall\'ee Poussin theorem, there exists a function $h:[0,\infty)\to [0,\infty)$ such that
$\lim_{x\to \infty} h(x)=\infty$ and such that
$$
C:=\intt [h(x)+h(y)] v_0(\dd x,\dd y) + \int h(z) b(\dd z) <\infty.
$$
One can moreover choose $h$ smooth and satisfying $0 \leq h'\leq 1$. Applying \eqref{abcd} with 
$\vp(x,y)=h(x)+h(y)$, one immediately  concludes that for all $t\geq 0$,
\begin{align*}
\intt [h(x)+h(y)]v_t(\dd x,\dd y) =& \intt [h(x)+h(y)]v_0(\dd x,\dd y)
+ \intot  \hskip-5pt  \intt [ g(x)h'(x)+g(y)h'(y)]v_s(\dd x,\dd y) \dd s \\
&\hskip-1.5cm+ \intot  \hskip-5pt  \intt  \Big[d(x) \Big(\int h(z) b(\dd z) - h(x)\Big) + d(y) \Big(\int h(z) b(\dd z) - h(y)\Big) \Big]v_s(\dd x,\dd y) \dd s
\\
\leq & C + 2 \overline C t + \intot  \hskip-5pt  \intt  \Big[d(x) (C-h(x))+ d(y) (C-h(y))\Big]v_s(\dd x,\dd y) \dd s,
\end{align*}
where $\overline C = \sup_{x \geq 0} g(x) h'(x)$ is finite because $g$ is continuous and non-increasing and 
because $h'$ is $[0,1]$-valued. Since now $d$ is continuous, 
non-negative and since $h$ increases to infinity, there is $L>0$
such that $d(x) (C-h(x)) \leq L -d(x)h(x)/2$, whence finally
$$
\intt [h(x)+h(y)]v_t(\dd x,\dd y) + \frac12\intot  \hskip-5pt  \intt \big[d(x)h(x)+d(y)h(y)\big] v_s(\dd x,\dd y) \dd s \leq 
C+2 \overline C t +2Lt.
$$
Since $\lim_{x\to \infty}h(x)=\infty$, this last {\it a priori} tightness estimate is sufficient 
to prove existence for \eqref{aa1}-\eqref{abcd}.

\vip

We next fix $a>0$, set $\rho(x,y)=\min(|x-y|,a)$ and we choose a coupling $v_0 \in {\mathcal H}(u^1_0,u^2_0)$ such that
$\int \hskip-5pt \int \rho(x,y)v_0(\dd x,\dd y)=\cT_\rho(u^1_0,u^2_0)$. We apply~\eqref{abcd} with
$\vp=\rho$ (or  more precisely,  firstly to some smooth and compactly supported approximation $\rho_\e$ of $\rho$
and then let $\e\to0$). We notice that
$$
g(x) \f{\p \rho(x,y)}{\p x}  +g(y) \f{\p   \rho(x,y) }{\p y} = \ind{|x-y|\leq a}\sgn(x-y)[g(x)-g(y)] \leq 0
$$
because $g$ is non-increasing. Since $b$ is a probability measure, we obtain
\begin{align*}
\intt \rho(x,y)v_t(\dd x,\dd y) \leq & \cT_\rho(u^1_0,u^2_0) - \intot  \hskip-5pt  \intt \rho(x,y)\max(d(x),d(y)) 
v_s(\dd x,\dd y)\dd s\\
&+   \intot  \hskip-5pt  \intt \hskip-8pt \int \Big[\rho(z,y)(d(x)-d(y))_+ + \rho(x,z)(d(y)-d(x))_+\Big] b(\dd z)
v_s(\dd x,\dd y)\dd s.
\end{align*}
Recalling that $\cT_\rho(u^1_t,u^2_t) \leq \int \hskip-5pt \int \rho(x,y)v_t(\dd x,\dd y)$, in order to complete the proof,  it is therefore sufficient to verify the inequality,  for all $x, \, y \geq 0$,
\begin{equation}\label{ttt}
I(x,y):= \f{\big( d(x)-d(y)\big)_+ }{ \max (d(x), d(y))} \int \varrho(z, y) b(\dd z) 
+ \f{\big( d(y)-d(x) \big)_+ }{ \max (d(x), d(y))}\int  \varrho(x, z)  b(\dd z)
\leq \varrho(x,y).
\end{equation}
Since $\varrho(x, y) = \min(a, | x-y|)$ and since $b$ is a probability measure, we have
$$
I(x,y) \leq \f{( d(x)-d(y)\big)_+ }{ \max (d(x), d(y))} a 
+ \f{\big( d(y)-d(x) \big)_+ }{ \max (d(x), d(y))} a = \frac{ |d(x)-d(y)|}{\max (d(x), d(y))} a 
\leq \min(|x-y|,a).
$$
Indeed, the last inequality is obvious if $|x-y|\geq a$ and follows from \eqref{renewMKas2} otherwise.
\end{proof}\qed

\section{A system of renewal equations}
\label{sec:sysRen}

In mathematical biology, it is usual to describe the cell cycle, see \cite{MD_LN68, P_birkhauser}, using a system of renewal equations coupled at there boundaries as follows 
\beq \bepa\label{sys}
 \f{\p u_t (x,i)}{\p t} + \f{\p [g_i(x) u_t(x,i)] }{\p x} +d_i(x) u_t(x,i) =0, 
\qquad t \geq 0, \; x \geq 0, \; i=1,..., I,
\\[5pt]
\dis u_t(x=0,i) = \int_0^\infty d_{i-1}(x) u_t(\dd x,i-1) , \qquad\quad t \geq 0, \; i=1,..., I,
\eepa \eeq
with the convention $u_t(x,0)=u_t(x,I)$. 
In terms of stochastic processes, a cell  with age $x$ in state $i$ ages chronologically (when, say,
$g_i\equiv 1$) until, with rate $d_i(x)$, it changes state to $i+1$ where it starts with age $x=0$. 
So the state space is 
\[
\cJ =[0,\infty)\times T, \; \text{with $T=\{1,...,I\}$ the torus, i.e., states $0$ and $I+1$ are identified 
to states $I$ and $1$}.
\]
We assume that for all $i\in T$,
\beq \label{AsSy1}
g_i, \, d_i \in C([0,\infty)), \qquad \hbox{$g_i$ is non-increasing}, \qquad g_i(0)\geq 0, \quad d_i\geq 0,
\eeq
and that 
\beq 
\exists \;  a>0 \qquad \text{such that} \qquad   a \leq \min_{1\leq i \leq I} \inf_{|x-y| \leq a} \frac{|x-y|\max(d_i(x),d_i(y))}{|d_i(x)-d_i(y)|} .
\label{renewAs4}
\eeq
This condition is satisfied if for all $i\in T$, there are $\alpha_i>0$, $\beta_i\geq 0$
and $p_i\geq 1$ such that $d_i(x)=\alpha_i+\beta_i x^{p_i}$,
or if all $d_i$'s are Lipschitz and uniformly positive.

\begin{theorem} Assume \eqref{AsSy1}-\eqref{renewAs4}. 
We consider the cost on $\cJ\times \cJ$ defined by
\[
\varrho(x,i,y,j) = \min( a, | x-y|) \ind{ i = j} + a \ind{i \neq j}.
\]
For any $u_0^1,u_0^2 \in \cP(\cJ)$,
there exists a pair of weak measure solutions 
$(u_t^1)_{t\geq 0},(u_t^2)_{t\geq 0} \subset \cP(\cJ)$ to \eqref{sys} starting from $u_0^1$ and
$u_0^2$, i.e., such that for $j=1,2$ and all $t\geq 0$
\begin{gather}
\int_0^t \! \int_\cJ \! d_i(x)u_s^j(\dd x, \dd i)\dd s<\infty
\label{sww1}
\end{gather}
and  for $j=1,2$, all $t\geq 0$ and all $\varphi \in C^1_c(\cJ)$,
\begin{align}
\int_\cJ \varphi(x,i)u_t^j(\dd x,\dd i) =& \int_\cJ \!\varphi(x,i)u_0^j(\dd x,\dd i)\notag\\ 
&+ \intot  \!\int_\cJ \!
\Big[g_i(x)\varphi'(x,i)+ d_i(x)[\varphi(0,i+1)-\varphi(x,i)] \Big] u_s^j(\dd x,\dd i)\dd s .\label{sww2}
\end{align}
Moreover,  for all $t\geq 0$, we have
$$
{\mathcal T}_\varrho (u^1_t,u^2_t) \leq {\mathcal T}_\varrho(u^1_0, u^2_0).
$$
\end{theorem}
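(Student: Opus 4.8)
The plan is to follow the proof of Theorem~\ref{th:tc:bm}, coupling the two solutions by a probability measure on $\cJ\times\cJ$ that solves a ``doubled'' equation and whose initial datum is taken optimal for $\cT_\varrho$. Given $v_0\in\cah(u_0^1,u_0^2)$, I would first construct a family $(v_t)_{t\ge0}$ of probability measures on $\cJ\times\cJ$, starting from $v_0$, with $\intot\intt[d_i(x)+d_j(y)]v_s(\dd x,\dd i,\dd y,\dd j)\,\dd s<\infty$ for all $t\ge0$, and weakly solving the equation whose transport part is $\p_x[g_i(x)v_t]+\p_y[g_j(y)v_t]$ and whose jump mechanism is the following: the two coordinates age independently through their respective fields; when they occupy a common state $i$, the age-reset/state-advance jump is performed \emph{jointly} at the common rate $\min(d_i(x),d_i(y))$, sending $(x,i,y,i)$ to the diagonal point $(0,i+1,0,i+1)$, while the surplus rates $(d_i(x)-d_i(y))_+$ and $(d_i(y)-d_i(x))_+$ advance only the first, resp. the second, coordinate (to $(0,i+1,y,i)$, resp. $(x,i,0,i+1)$); when the two states differ, the two jumps are performed independently, at rates $d_i(x)$ and $d_j(y)$. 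This is the exact analogue of the three-term coupling in \eqref{abcd}, with $b=\delta_0$ and with $(x,i)\mapsto(0,i+1)$ playing the role of the reset map. Since $\min(d_i(x),d_i(y))+(d_i(x)-d_i(y))_+=d_i(x)$, testing against a function of $(x,i)$ only collapses the jump part to $d_i(x)[\vp(0,i+1)-\vp(x,i)]$ in \emph{both} regimes $i=j$ and $i\neq j$; hence the two marginals $u_t^1,u_t^2$ of $v_t$ satisfy \eqref{sww1}--\eqref{sww2}, and $v_t\in\cah(u_t^1,u_t^2)$ for every $t$.

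Existence of $(v_t)$ I would obtain, just as in Theorem~\ref{th:tc:bm}, by regularizing $g_i,d_i$ into smooth bounded functions, solving, and passing to the limit by means of an \emph{a priori} tightness estimate. By the de la Vall\'ee Poussin theorem, pick $h$ with $0\le h'\le1$, $h(0)=0$, $\lim_{x\to\infty}h(x)=\infty$ and $C:=\intt[h(x)+h(y)]v_0(\dd x,\dd i,\dd y,\dd j)<\infty$. Since every jump sends an age to $0$ and $h(0)=0$, testing with $\vp(x,i,y,j)=h(x)+h(y)$ makes the jump part equal to $-d_i(x)h(x)-d_j(y)h(y)$ in all cases, whence
$$
\intt[h(x)+h(y)]v_t+\intot\intt[d_i(x)h(x)+d_j(y)h(y)]v_s\,\dd s\le C+2\overline C t,\qquad \overline C:=\sup_{i}\sup_{x\ge0}g_i(x)h'(x)<\infty,
$$
which controls both the tightness of $(v_t)_{t\le T}$ and the required integrability (as $d_i$ is bounded on compacts and $h\to\infty$). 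I then choose $v_0$ optimal, $\intt\varrho\,v_0=\cT_\varrho(u_0^1,u_0^2)$, so that it only remains to prove that $t\mapsto\intt\varrho\,v_t$ is non-increasing.

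For the last step I would apply the weak formulation with $\vp=\varrho$ --- rigorously with a smooth, compactly supported approximation of $\varrho$ in the age variables for each fixed pair of states, then let $\e\to0$, exactly as in Theorem~\ref{th:tc:bm}. The transport contribution is $\ind{i=j}\ind{|x-y|\le a}\sgn(x-y)[g_i(x)-g_i(y)]\le0$, since each $g_i$ is non-increasing and $\varrho$ is constant in $(x,y)$ when $i\neq j$. For the jump contribution, when $i\neq j$ every target configuration has cost at most $a=\varrho(x,i,y,j)$, so this term is $\le0$; when $i=j$, using $\varrho(0,i+1,0,i+1)=0$, $\varrho(0,i+1,y,i)\le a$, $\varrho(x,i,0,i+1)\le a$ and writing $r:=\varrho(x,i,y,i)=\min(a,|x-y|)$, the jump contribution is bounded above by
$$
-\min(d_i(x),d_i(y))\,r+|d_i(x)-d_i(y)|\,(a-r),
$$
which is $\le0$ as soon as $\frac{|d_i(x)-d_i(y)|}{\max(d_i(x),d_i(y))}\,a\le\min(a,|x-y|)$: trivial if $|x-y|\ge a$, and exactly assumption \eqref{renewAs4} if $|x-y|<a$. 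Thus $\intt\varrho\,v_t\le\intt\varrho\,v_0=\cT_\varrho(u_0^1,u_0^2)$, and $\cT_\varrho(u_t^1,u_t^2)\le\intt\varrho\,v_t$ completes the argument.

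The only genuinely new point with respect to the renewal equation is the first step: the coupled generator \emph{switches} form according to whether the two state components agree, so the construction of the coupled measure solution and the passage to the limit in the regularization deserve a little care; the cost computations are then a direct transcription of Theorem~\ref{th:tc:bm}.
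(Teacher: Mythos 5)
Your proposal is correct and follows essentially the same route as the paper: construct the coupled measure on $\cJ^2$ with joint resets at the minimum rate plus surplus jumps when $i=j$ and independent jumps when $i\neq j$, establish tightness via de la Vall\'ee Poussin with $\vp=h(x)+h(y)$ (the identity $\min+(\cdot)_+=d_i$ collapsing the jump part to $-d_i(x)h(x)-d_j(y)h(y)$ in both regimes), and then test with $\vp=\varrho$ and invoke \eqref{renewAs4}. Your reformulation of the diagonal jump contribution as $-\min(d_i(x),d_i(y))\,r+|d_i(x)-d_i(y)|(a-r)$ is algebraically the same as the paper's $-\Delta_i(x,y)=-(d_i(x)\vee d_i(y))\,r+|d_i(x)-d_i(y)|\,a$, so the final verification is identical.
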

\vip
Note that if $u_t(x,i)$ is a strong solution to \eqref{sys}, then $u_t(\dd x,\dd i):=\sum_{k\in T} u_t(x,k)\dd x
\delta_k(\dd i)$ is a weak measure solution to \eqref{sys}. Note also that the comments after 
Theorem~\ref{th:tc:bm} apply here too.
\vip

\begin{proof}
We assume \eqref{AsSy1}-\eqref{renewAs4} and consider any coupling $v_0 \in {\mathcal H}(u^1_0,u^2_0)$.
There exists a family $(v_t)_{t\geq 0}$ of probability measures on $\cJ^2$ such that for all $t\geq 0$,
\begin{align}\label{rr1}
\int_0^t  \hskip-5pt\intt [d_i(x)+d_j(y)] v_s(\dd x,\dd i,\dd y, \dd j) \dd s<\infty,
\end{align}
and which weakly solves
\begin{align*}
 \f{\p v_t }{\p t} + &\f{\p  [g_i(x) v_t] }{\p x} +   \f{\p [g_j(y) v_t]}{\p y} + \big[(d_i(x) \vee d_j(y)) \ind{ i = j}  + (d_i(x) +d_j(y) ) \ind{i \neq j} \big] v_t = 
\\
& \quad \delta_{(0,0)}(x,y)  \ind{ i = j} \intt  (d_{i-1}(x') \wedge d_{i-1}(y'))  \; v_t (\dd x', i-1, \dd y',j-1)  
\\
&+\delta_{0}(x)  \ind{ j= i-1}  \int \big( d_{i-1}(x')- d_{j}(y) \big)_+ \; v_t (\dd x',i-1, y,j) 
\\
&+\delta_{0}(y)  \ind{ i = j-1}  \int \big( d_{j-1}(y') -d_{i}(x') \big)_+ \; v_t (x,i, \dd y', j-1 ) 
\\
&+\delta_{0}(x)  \ind{ j \neq i-1}  \hspace{-4pt} \int  d_{i-1}(x') v_t ( \dd x',i-1,y,j) 
+\delta_{0}(y)  \ind{ i \neq j-1}   \hspace{-4pt} \int  d_{j-1}(y') v_t ( x,i,\dd y', j-1) . 
\end{align*}
This means that for all $\vp \in C^1_c(\cJ^2)$, all $t\geq 0$,
\begin{align}
\intt \vp(x,i,y,j)v_t(&\dd x,\dd i,\dd y, \dd j)=
\intt \vp(x,i,y,j)v_0(\dd x,\dd i,\dd y, \dd j) \notag\\
&+ \intot \hskip-5pt \intt \Big[  g_i(x)\f{\p  \vp(x,i,y,j)}{\p x} +  g_j(y)\f{\p \vp(x,i,y,j)}{\p y}\Big] 
v_s(\dd x,\dd i,\dd y, \dd j) \dd s \notag\\
&+ \intot \hskip-5pt \intt \Big(
\big[ \vp (0,i+1,0,j+1) - \vp (x,i,y,j)\big] \ind{ i = j}\big( d_{i}(x)\land d_j(y)\big)
\notag\\
& \hspace{20mm}+ \big[ \vp (0,i+1,y,j) - \vp (x,i,y,j)\big] \ind{ i = j} \big( d_{i}(x)- d_{j}(y) \big)_+
\notag \\
& \hspace{20mm}+ \big[ \vp (x,i,0,j+1) - \vp (x,i,y,j)\big] \ind{ i = j} \big( d_{j}(y)- d_{i}(x) \big)_+
\notag \\
& \hspace{20mm}+ \big[ \vp (0,i+1,y,j) - \vp (x,i,y,j)\big] \ind{ i \neq j} d_{i}(x)
\notag \\
&\hspace{20mm} + \big[ \vp (x,i,0,j+1) - \vp (x,i,y,j)\big] \ind{ i \neq j} d_{j}(y)  \Big)
v_s(\dd x,\dd i,\dd y, \dd j) \dd s. \label{rr2}
\end{align}
Using \eqref{rr1} and \eqref{rr2} with a function $\vp$ depending only on $(x,i)$ (or $(y,j)$),
we see that the first marginal $u^1_t(\dd x,\dd i)= \int_{(y,j)\in \cJ} v_t(\dd x,\dd i,\dd y, \dd j)$
(or the second marginal $u^2_t(\dd y,\dd j)= \int_{(x,i)\in \cJ} v_t(\dd x,\dd i,\dd y, \dd j)$)
satisfies \eqref{sww1} and \eqref{sww2}.

\vip
As before, the existence for \eqref{rr1}-\eqref{rr2} follows from classical arguments, 
using an approximate problem where $g_i,d_i$ are replaced by smooth and bounded functions, 
and from the following {\it a priori} tightness estimate.
By the de la Vall\'ee Poussin theorem, there exists a function $h:[0,\infty)\to [0,\infty)$ such that
$\lim_{x\to \infty} h(x)=\infty$ and such that
$$
C:=\intt [h(x)+h(y)] v_0(\dd x,\dd i,\dd y,\dd j) <\infty.
$$
One can moreover choose $h$ smooth, satisfying $h(0)=0$ and $0 \leq h'\leq 1$. Applying \eqref{rr2} with 
$\vp(x,i,y,j)=h(x)+h(y)$, one immediately  concludes that for all $t\geq 0$,
\begin{align*}
\intt [h(x)+h(y)]v_t(\dd x,\dd i,\dd y, \dd j) =& \intt [h(x)+h(y)]v_0(\dd x,\dd i,\dd y, \dd j)\\
&+ \intot  \hskip-5pt \intt [ g_i(x)h'(x)+g_j(y)h'(y)]v_s(\dd x,\dd i,\dd y, \dd j) \dd s\\
&- \intot  \hskip-5pt \intt [d_i(x)h(x)+d_j(y)h(y)]v_s(\dd x,\dd i,\dd y,\dd j) \dd s.
\end{align*}
Setting $\overline C = \sup_{i\in T,x \geq 0} g_i(x) h'(x)$, which is finite because $g_i$ is continuous, non-increasing and 
because $h'$ is $[0,1]$-valued, we end up with
$$
\intt [h(x)+h(y)]v_t(\dd x,\dd i,\dd y,\dd j) + \intot \hskip-5pt \intt \big[d_i(x)h(x)+d_j(y)h(y) \big] v_s(\dd x,\dd i,\dd y,\dd j)
\dd s \leq C+2 \overline C t.
$$
This last {\it a priori} tightness estimate is sufficient 
to prove existence for \eqref{rr1}-\eqref{rr2}.

\vip

We now apply \eqref{rr2} with $\vp=\varrho$, after regularization as before, 
where we recall that $\varrho(x,i,y,j) = \min( a, | x-y|) \ind{ i = j} + a \ind{i \neq j}$.
Since $g_i$ is non-increasing, we have
$$
g_i(x)\f{\p  \varrho(x,i,y,j)}{\p x} +  g_j(y)\f{\p \varrho(x,i,y,j)}{\p y}
=\ind{i=j}\ind{|x-y|\leq a} \sgn(x-y)[g_i(x)-g_j(y)]\leq 0.
$$
Hence we find, choosing $v_0$ such that $\int\hskip-5pt\int \varrho(x,i,y,j)v_0(\dd x,\dd i,\dd y, \dd j)
=\cT_\rho(u^1_0,u^2_0)$,
\begin{align*}
\intt \varrho(x,i,y,j)&v_t(\dd x,\dd i,\dd y, \dd j)\\
 \leq \cT_\rho(u^1_0,u^2_0) &+ \intot  \hskip-5pt \intt \Big(
\big[ \vr (0,i+1,0,j+1) - \vr (x,i,y,j)\big] \ind{ i = j}\big( d_{i}(x)\land d_j(y)\big)
\notag\\
& \hspace{20mm}+ \big[ \vr (0,i+1,y,j) - \vr (x,i,y,j)\big] \ind{ i = j} \big( d_{i}(x)- d_{j}(y) \big)_+
\notag \\
& \hspace{20mm}+ \big[ \vr (x,i,0,j+1) - \vr (x,i,y,j)\big] \ind{ i = j} \big( d_{j}(y)- d_{i}(x) \big)_+
\notag \\
& \hspace{20mm}+ \big[ \vr (0,i+1,y,j) - \vr (x,i,y,j)\big] \ind{ i \neq j} d_{i}(x)
\notag \\
&\hspace{20mm} + \big[ \vr (x,i,0,j+1) - \vr (x,i,y,j)\big] \ind{ i \neq j} d_{j}(y)  \Big)
v_s(\dd x,\dd i,\dd y, \dd j) \dd s.
\end{align*}
With our choice of $\vr$, the two last lines are non-positive. We thus arrive at 
\begin{align*}
\intt \varrho(x,i,y,j) v_t(\dd x,\dd i,\dd y, \dd j)
\leq \cT_\rho(u^1_0,u^2_0) 
-\intot  \hskip-5pt \intt \ind{i=j}\Delta_i(x,y) v_s(\dd x,\dd i,\dd y, \dd j) \dd s,
\end{align*}
where
\begin{align*}
\Delta_i(x,y)=& (d_i(x) \vee d_i(y))(|x-y|\land a) - ( d_{i}(x)- d_{i}(y))_+ a - 
( d_{i}(y)- d_{i}(x))_+ a \\
=&(d_i(x) \vee d_i(y))(|x-y|\land a) - |d_{i}(x)- d_{i}(y)| a. 
\end{align*}
Next, we  check that $\Delta_i$ is always non-negative.
If $|x-y|\geq a$, this is obvious. If $|x-y|\leq a$, this follows from
\eqref{renewAs4}. Since $\cT_\rho(u^1_t,u^2_t)\leq \int\hskip-5pt\int \rho(x,i,y,j)v_t(\dd x,\dd i,\dd y,\dd j)$, 
the proof is complete.
\end{proof} \qed

\section{Space and age structure}

Next,  we consider an example similar to that in the previous section, when the discrete 
parameter $i$ is replaced by a continuous parameter $z\in \R^d$, which represents space or a physiological trait. The formalism makes the link with the heat equation through a standard physical process used in  particular to describe diffusion or anomalous diffusion,  see recent analyses in \cite{NPT2018, CGM2019, BerryLG2016}. 
We depart from the equation 
\beq \bepa
\dis \e^2 \f{\p u_t(x,z) }{\p t}  + \f{\p u_t(x,z) }{\p x}  +d(x)  u_t(x,z)=0, \qquad\; t \geq 0, \, x >0, \, z \in \R^d,
\\ [10pt]
\dis u_t(x=0, z)= \int_{0}^\infty  \hskip-8pt  \int_{\R^d} d(x)  u_t(\dd x, z+ \e \eta) k(\dd \eta), 
\qquad t \geq 0, \, z \in \R^d.
\eepa
\label{eq.renewSpace}
\eeq
This equation models particles characterized by their age $x$ and position $z$. 
When in state $x,z$, the particle's age $x$ grows linearly until there is a jump,
at rate $d(x)$, resulting in the particle moving  from $z$ to $z-\e \eta$ (with $\eta$ chosen according to the probability
density $k$). At each jump, the age is reset to $0$.
Hence the state space is here $\cJ=[0,\infty)\times\R^d$. 
Under a few assumptions, it is known that, for $u_\e$ the solution to
\eqref{eq.renewSpace}, $U_\e(t,z)=\int_0^\infty u_\e(t,x,z)\dd x$
converges, as $\e\to 0$, to the solution of the heat equation in $\R^d$.
\vip

We assume that 
\begin{align}\label{aas1}
d \in C([0,\infty)),\quad d\geq 0,\quad k \in \cP(\R^d),
\end{align}
and, again, that
\beq
\exists \;  a>0 \qquad \text{such that } \quad a\leq \inf_{|x-y]\leq a} \f{ |x-y|\max (d(x), d(y)) } { |d(x)-d(y)|} .
\label{aas2}
\eeq

\begin{theorem} Assume \eqref{aas1}-\eqref{aas2} and fix $\e>0$.
We consider the cost on $\cJ\times \cJ$ defined by
\[
\varrho(x,z,y,r) = \min( a, | x-y| + |z-r|).
\]
For any $u_0^1,u_0^2 \in \cP(\cJ)$,
there exists a pair of weak measure solutions
$(u_t^1)_{t\geq 0},(u_t^2)_{t\geq 0} \subset \cP(\cJ)$ to \eqref{eq.renewSpace} starting from $u_0^1$ and
$u_0^2$, i.e., such that for $i=1,2$ and all $t\geq 0$,
\begin{gather}
\int_0^t \hskip-5pt \int_\cJ \! d(x)u_s^i(\dd x, \dd z)\dd s<\infty  \label{aasww1}
\end{gather}
and for  $i=1,2$, all $t\geq 0$ and all $\varphi \in C^1_c(\cJ)$,
\begin{align}
\int_\cJ \varphi(x,z)u_t^i&(\dd x,\dd z) = \int_\cJ \!\varphi(x,z)u_0^i(\dd x,\dd z)\notag\\ 
&+ \e^{-2}\intot  \hskip-5pt \int_\cJ \!
\Big[\frac{\p \varphi(x,z)}{\p x}+ d(x) \int_{\R^d}[\varphi(0,z-\e \eta)-\vp(x,z)]k(\dd \eta) \Big] 
u_s^i(\dd x,\dd z)\dd s. \label{aasww2}
\end{align}
Moreover, for all $t\geq 0$, we have 
$$
{\mathcal T}_\varrho (u^1_t,u^2_t) \leq {\mathcal T}_\varrho(u^1_0, u^2_0).
$$
\label{th:tcRenSA}
\end{theorem}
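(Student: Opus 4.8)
The plan is to mimic the proof of Theorem~\ref{th:tc:bm}, building a coupled measure on $\cJ^2 = ([0,\infty)\times\R^d)^2$ whose marginals solve \eqref{eq.renewSpace} and for which $\int\!\!\int \varrho\, v_t$ is non-increasing. First I would propose the coupled equation: the deterministic drift is $\e^{-2}\p_x$ in each of the two age variables; the jump part, occurring at total rate $\e^{-2}\max(d(x),d(y))$, is split into a \emph{synchronized} jump at rate $\e^{-2}\min(d(x),d(y))$ in which both particles reset their age to $0$ and move by the \emph{same} increment $-\e\eta$ (so the spatial displacement $z-r$ is preserved and the age displacement becomes $0$), plus two \emph{independent} jumps: at rate $\e^{-2}(d(x)-d(y))_+$ the first particle alone resets its age to $0$ and moves by $-\e\eta$, and symmetrically at rate $\e^{-2}(d(y)-d(x))_+$ for the second. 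In weak form, for $\vp\in C^1_c(\cJ^2)$,
\begin{align*}
\intt \vp(x,z,y,r)v_t(\dd x,\dd z,\dd y,\dd r) =& \intt \vp\, v_0 + \e^{-2}\intot\!\!\intt\Big[\f{\p\vp}{\p x}+\f{\p\vp}{\p y}\Big]v_s\,\dd s\\
&\hspace{-2cm}+\e^{-2}\intot\!\!\intt\!\int_{\R^d}\!\big[\vp(0,z-\e\eta,0,r-\e\eta)-\vp\big]\big(d(x)\land d(y)\big)k(\dd\eta)v_s\,\dd s\\
&\hspace{-2cm}+\e^{-2}\intot\!\!\intt\!\int_{\R^d}\!\big[\vp(0,z-\e\eta,y,r)-\vp\big]\big(d(x)-d(y)\big)_+k(\dd\eta)v_s\,\dd s\\
&\hspace{-2cm}+\e^{-2}\intot\!\!\intt\!\int_{\R^d}\!\big[\vp(x,z,0,r-\e\eta)-\vp\big]\big(d(y)-d(x)\big)_+k(\dd\eta)v_s\,\dd s.
\end{align*}
Testing with $\vp$ depending only on $(x,z)$ and using $\min(d(x),d(y))+(d(x)-d(y))_+=d(x)$ recovers \eqref{aasww1}-\eqref{aasww2} for the first marginal, and symmetrically for the second, so $v_t\in\cah(u^1_t,u^2_t)$.

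Next I would establish existence for the coupled system. As in the renewal case, one approximates $d$ by smooth bounded functions and passes to the limit using an a priori tightness estimate: pick via de la Vall\'ee Poussin a smooth $h:[0,\infty)\to[0,\infty)$ with $h(0)=0$, $0\le h'\le 1$, $h(x)\to\infty$, and $C:=\intt[h(x)+h(|z|)+h(y)+h(|r|)]v_0<\infty$ (or a similar combination controlling both age and space). Testing with $\vp=h(x)+h(y)$ controls ages exactly as before (the drift term contributes at most $2\e^{-2}t$, the jump term produces $-\e^{-2}\int d(x)h(x)$ up to the bounded resets $h(0)=0$, which gives the crucial $\int\!\!\int d(x)h(x)$ integrability); testing with a suitable function of $|z|,|r|$ controls the spatial variable using that each jump moves $z$ by a bounded-in-law amount (one needs a moment assumption on $k$, or one truncates). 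The nonnegativity-type cancellation $h(0)-h(x)\le -h(x)$ plus continuity of $d$ and $h\to\infty$ yields the tightness needed to extract a convergent subsequence.

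Finally, the contraction: choose $v_0\in\cah(u^1_0,u^2_0)$ optimal for $\cT_\varrho$, regularize $\varrho(x,z,y,r)=\min(a,|x-y|+|z-r|)$ to a smooth compactly supported $\varrho_\e$, test, and let $\e\to0$. On the region $|x-y|+|z-r|<a$ one has $\p_x\varrho+\p_y\varrho=\sgn(x-y)(1-1)=0$, so the drift term vanishes (this is cleaner than in Theorem~\ref{th:tc:bm}, where it was merely $\le 0$). It remains to check that the jump terms are $\le 0$ pointwise, i.e.
\[
\big(d(x)-d(y)\big)_+\!\!\int\!\varrho(0,z-\e\eta,y,r)k(\dd\eta)+\big(d(y)-d(x)\big)_+\!\!\int\!\varrho(x,z,0,r-\e\eta)k(\dd\eta)\le \max(d(x),d(y))\,\varrho(x,z,y,r).
\]
Bounding each $\varrho(\cdot)$ by $a$ and using $k\in\cP(\R^d)$ gives left side $\le |d(x)-d(y)|\,a$; when $|x-y|+|z-r|\ge a$ the right side is $\max(d(x),d(y))a\ge|d(x)-d(y)|a$, and when $|x-y|+|z-r|<a$ in particular $|x-y|\le a$, so \eqref{aas2} gives $|d(x)-d(y)|a\le|x-y|\max(d(x),d(y))\le\varrho(x,z,y,r)\max(d(x),d(y))$ — exactly the needed inequality. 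Hence $t\mapsto\intt\varrho\,v_t$ is non-increasing, and since $\cT_\varrho(u^1_t,u^2_t)\le\intt\varrho\,v_t$ the claim follows. The main obstacle I expect is the spatial tightness in the existence step: unlike age, the variable $z$ is unbounded in $\R^d$ and is only moved by jumps, so controlling its growth requires either a moment hypothesis on $k$ (not assumed in \eqref{aas1}) or a truncation argument, and one must make sure the coupled construction is consistent across the approximation — the weak-convergence/tightness bookkeeping on the product space $\cJ^2$ with the singular reset kernel $\delta_0$ in the age variable is the delicate part, though it is routine given the structure.
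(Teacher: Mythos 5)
Your coupling equation, the synchronized/independent jump decomposition, and the observation that the drift term cancels exactly are all the same as the paper's; the gap is in the final pointwise verification, and it is a real one. Writing the jump contribution to the derivative of $\intt\varrho\, v_t$ with the ``$-\varrho$'' pieces regrouped (using $\min(d(x),d(y))+(d(x)-d(y))_++(d(y)-d(x))_+=\max(d(x),d(y))$), the condition $\Delta\le0$ is
\begin{align*}
\min(d(x),d(y))\!\int\!\varrho(0,z-\e\eta,0,r-\e\eta)k(\dd\eta)
&+(d(x)-d(y))_+\!\int\!\varrho(0,z-\e\eta,y,r)k(\dd\eta)\\
&+(d(y)-d(x))_+\!\int\!\varrho(x,z,0,r-\e\eta)k(\dd\eta)\ \le\ \max(d(x),d(y))\,\varrho(x,z,y,r),
\end{align*}
and you dropped the first (synchronized) term, which equals $\min(d(x),d(y))\min(a,|z-r|)$ and is not zero in general. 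This is not a harmless omission. If you instead keep the factored form and only argue that the synchronized block $\min(d(x),d(y))[\min(a,|z-r|)-\varrho]$ is $\le0$, then the two independent blocks must satisfy the \emph{tighter} inequality with $|d(x)-d(y)|\,\varrho$ on the right-hand side, and that one fails: take $|z-r|=0$, $y$ large (so $\int\varrho(0,z-\e\eta,y,r)k(\dd\eta)$ is close to $a$) and $|x-y|=\varrho$ small. The paper's argument is exactly to \emph{keep} the synchronized term and use it to match the $|z-r|$ part of the cost: bound it by $\max(d(x),d(y))\min(a,|z-r|)\le\max(d(x),d(y))|z-r|$, bound your two cross terms (as you do) by $|d(x)-d(y)|\,a\le\max(d(x),d(y))\,|x-y|$ via \eqref{aas2} (valid since $|x-y|+|z-r|<a$ implies $|x-y|\le a$), and add, reassembling $\max(d(x),d(y))(|x-y|+|z-r|)=\max(d(x),d(y))\varrho$; the case $|x-y|+|z-r|\ge a$ is immediate. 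The interplay between the synchronized jump and the $|z-r|$ part of $\varrho$ is precisely the point of the construction.

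On the existence step, the moment hypothesis on $k$ you flag is not needed: de la Vall\'ee Poussin is applied simultaneously to $v_0$ and to $k$, i.e.\ one chooses $h$ (smooth, $h(0)=0$, $0\le h'\le1$, subadditive, $h\to\infty$) so that both $\intt[h(x)+h(y)+h(|z|)+h(|r|)]\,v_0(\dd x,\dd z,\dd y,\dd r)$ and $\int h(\e|\eta|)k(\dd\eta)$ are finite; then $h(|z-\e\eta|)-h(|z|)\le h(\e|\eta|)$ provides the needed control of the spatial variable under jumps, with no extra assumption and no truncation.
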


\bigskip

\begin{proof} We assume \eqref{aas1}-\eqref{aas2}, consider $u^1_0,u^2_0 \in \cP(\cJ)$
and a coupling $v_0 \in {\mathcal H}(u^1_0,u^2_0)$. There exists a family $(v_t)_{t\geq 0}$ of probability
measures on $\cJ^2$ such that for all $t\geq 0$ 
\begin{align}\label{ss1}
\int_0^t \intt [d(x)+d(y)] v_t(\dd x,\dd z,\dd y, \dd r)<\infty,
\end{align}
and which weakly solve
\begin{align*} 
\e^2 \f{\p v_t}{\p t}    + \f{\p v_t}{\p x} +& \f{\p v_t}{\p y}  +(b(x)\vee b(y))   v_t =
\\
& \quad \delta (x) \delta(y)  \int_{0}^\infty \hskip-8pt   \int_{0}^\infty \hskip-8pt \int_{\R^d} (b(x') \wedge b(y')) v_t(\dd x', z+\e \eta, \dd y', r+\e \eta) k(\eta) \dd \eta
\\
& +  \delta (x)   \int_{\cJ} \big(b(x') - b(y)\big)_+  v_t(\dd x', z+\e \eta,y, r + \e \eta) k(\eta) \dd \eta
\\
& +  \delta (y)   \int_{\cJ} \big(b(y') - b(x)\big)_+  v_t(x, z+ \e \eta, \dd y', r +\e \eta) k(\eta) \dd \eta .
\end{align*} 
This means that for all $\vp \in C^1_c(\cJ^2)$, all $t\geq 0$,
\begin{align}
\intt& \vp(x,z,y,r)v_t(\dd x,\dd z,\dd y, \dd r)=
\intt \vp(x,z,y,r)v_0(\dd x,\dd z,\dd y, \dd r) \notag\\
&+ \e^{-2}\intot \hskip-5pt  \intt \Big[\f{\p  \vp(x,z,y,r)}{\p x} +  \f{\p \vp(x,z,y,r)}{\p y}\Big] 
v_s(\dd x,\dd z,\dd y, \dd r) \dd s \notag\\
&+ \e^{-2}\intot \hskip-5pt \intt \Big(
\big[ (d(x)\land d(y)) \int_{\R^d} [\vp (0,z-\e \eta,0,r-\e \eta) - \vp(x,z,y,r)] k(\dd \eta) \notag\\
& \hspace{20mm}+  (d(x)-d(y))_+ \int_{\R^d} [\vp (0,z-\e \eta,y,r) - \vp(x,z,y,r)] k(\dd \eta)\notag \\
& \hspace{20mm}+  (d(y)-d(x))_+ \int_{\R^d} [\vp (x,z,0,r-\e\eta) - \vp(x,z,y,r)] k(\dd \eta)
\Big) v_s(\dd x,\dd z,\dd y, \dd r) \dd s. \label{ss2}
\end{align}
Using \eqref{ss1} and \eqref{ss2} with a function $\vp$ depending only on $(x,z)$ (or $(y,r)$),
we see that the first marginal $u^1_t(\dd x,\dd z)= \int_{(y,r)\in \cJ} v_t(\dd x,\dd z,\dd y, \dd r)$
(and the second one $u^2_t(\dd y,\dd r)= \int_{(x,z)\in \cJ} v_t(\dd x,\dd z,\dd y, \dd r)$)
satisfies \eqref{aasww1} and \eqref{aasww2}.

\vip
The existence for \eqref{rr1}-\eqref{rr2} follows as usual from the following {\it a priori} tightness estimate.
By the de la Vall\'ee Poussin theorem, there is a function $h:[0,\infty)\to [0,\infty)$ such that
$\lim_{x\to \infty} h(x)=\infty$ and
$$
C:=\intt [h(x)+h(y)+h(|z|)+h(|r|)] v_0(\dd x,\dd z,\dd y,\dd r)  + \int h(\e |\eta|) k(\dd \eta) <\infty.
$$
One can moreover choose $h$ smooth, satisfying $h(0)=0$ and $0 \leq h'\leq 1$. Applying \eqref{ss2} with 
$\vp(x,z,y,r)=h(x)+h(y)+h(|z|)+h(|r|)$, one immediately  concludes that for all $t\geq 0$,
\begin{align*}
\intt [h(x)&+h(y)+h(|z|)+h(|r|)]v_t(\dd x,\dd z,\dd y, \dd r ) \leq C
+ \e^{-2}\intot \hskip-5pt \intt [h'(x)+h'(y)]v_s(\dd x,\dd z,\dd y, \dd r) \dd s\\
&+ \e^{-2}\intot \hskip-5pt \intt \Big[d(x)\int [h(0)+h(|z-\e \eta|)-h(x)-h(|z|)]k(\dd \eta)\\
&\hskip3cm+d(y)\int [h(0)+h(|r-\e \eta|)-h(y)-h(|r|)]k(\dd \eta) \Big]v_s(\dd x,\dd z,\dd y, \dd r) \dd s.
\end{align*}
Using that $h'$ is $[0,1]$-valued and that $h(0)=0$, we observe that
$$
\int \big[h(0)+h(|z-\e \eta|)-h(x)-h(|z|) \big]k(\dd \eta)\leq \int h(|\e \eta|)(\dd \eta)-h(x) \leq C-h(x).
$$
Using now that $d(x)[C-h(x)] \leq L - d(x)h(x)/2$ for some constant $L>0$, we end up with
\begin{align*}
\intt [h(x)\!+h(y)\!+h(|z|)\!+h(|r|)]v_t(\dd x,\dd z,\dd y, \dd r) 
+\frac1{2\e^2} \intot \hskip-5pt \intt &[d(x)h(x)+d(y)h(y)]v_s(\dd x,\dd z,\dd y, \dd r) \dd s\\
&\leq C
+ 2 \e^{-2} t + 2 \e^{-2}L t.
\end{align*}
This {\it a priori} tightness estimate is sufficient, as usual, 
to prove existence for \eqref{ss1}-\eqref{ss2}.
\vip

We now apply \eqref{ss2} with $\vp=\varrho$, where we recall that
$\varrho(x,z,y,r)= \min( a, | x-y| + |z-r|)$. Since
$$
\f{\p  \varrho(x,z,y,r)}{\p x} +  \f{\p \varrho(x,z,y,r)}{\p y}=0,
$$
we find, choosing $v_0$ such that $\int\hskip-5pt\int \varrho(x,z,y,r)v_0(\dd x,\dd z,\dd y, \dd r)
=\cT_\rho(u^1_0,u^2_0)$,
\begin{align*}
\intt \varrho&(x,z,y,r)v_t(\dd x,\dd z,\dd y, \dd r)\leq \cT_\rho(u^1_0,u^2_0)
+ \e^{-2}\intot \hskip-5pt \intt \Delta(x,z,y,r)v_s(\dd x,\dd z,\dd y, \dd r) \dd s,
\end{align*}
where
\begin{align*}
 \Delta(x,z,y,r)=&(d(x)\land d(y)) \int_{\R^d} [\vr (0,z-\e \eta,0,r-\e \eta) - \vr(x,z,y,r)] k(\dd \eta) \notag\\
& +(d(x)-d(y))_+ \int_{\R^d} [\vr (0,z-\e \eta,y,r) - \vr(x,z,y,r)] k(\dd \eta)\notag \\
& +(d(y)-d(x))_+ \int_{\R^d} [\vr (x,z,0,r-\e\eta) - \vr(x,z,y,r)] k(\dd \eta).
\end{align*}
Since $\cT_\rho(u^1_t,u^2_t)\leq \int\hskip-5pt\int \rho(x,z,y,r)v_t(\dd x,\dd z,\dd y,\dd r)$, 
it thus only remains to check that $\Delta$ is always non-positive. But we have, assuming
e.g. that $d(x)\geq d(y)$,
\begin{align*}
\Delta(x,z,y,r)=& -d(x)[(|x-y|+|z-r|)\land a] + d(y) (|z-r|\land a)\\
&+(d(x)-d(y))\int_{\R^d} [(|y|+|z-\e\eta-r|)\land a]k(\dd\eta)\\
\leq & -d(x)[(|x-y|+|z-r|)\land a] + d(y) (|z-r|\land a)+(d(x)-d(y))a.
\end{align*}
If $|x-y|+|z-r|\geq a$, we have
$$
\Delta(x,z,y,r) \leq -d(x)a+d(y)a+(d(x)-d(y))a= 0.
$$
If $|x-y|+|z-r|\leq a$, still assuming that $d(x)\geq d(y)$, we have
\begin{align*}
\Delta(x,z,y,r) \leq& -d(x)(|x-y|+|z-r|)+d(y)|z-r|+(d(x)-d(y))a \\
\leq & -d(x)|x-y|+(d(x)-d(y))a \leq 0
\end{align*}
thanks to \eqref{aas2}, since  $|x-y|+|z-r|\leq a$ implies that $|x-y|\leq a$.
Therefore, we always  have  $\Delta (x,z,y,r) \leq 0$ and the proof is complete.
\end{proof} \qed

\section{The multiple time renewal equation}
\label{sec:severaltimes}

Several applications use multi-time renewal equations to describe a population density subjected to aging 
or to time-evolution. Recently, for  evaluating the efficiency of tracing softwares,  
it was used to take into account secondary infections, see~\cite{ferretti}. In neuroscience, the 
interpretation is that neurones keep memory of their last spikes in the process of deciding when to fire 
again, see~\cite{CCDR}. For two times memory, the equation reads

\beq \bepa
\f{\p u_t(x_1,x_2)}{\p t} + \f{\p u_t(x_1,x_2)}{\p x_1}  + \f{\p u_t(x_1,x_2)}{\p x_2}   +d(x_1,x_2) u_t(x_1,x_2)=0,  \qquad x_2 \geq  x_1 \geq 0,\; t\geq 0 ,
\\ [10pt]
u_t(x_1=0, x_2)= \int_{x_1}^\infty d(x_2,z) u_t(x_2,\dd z),\hskip4.2cm x_2\geq 0,\; t\geq 0.
\eepa
\label{eq.multirenewal}
\eeq
An example of  stochastic  interpretation is that  particles are individuals producing {\it events} at a rate depending on the ages of its two last
events. Here the variables $x_1$ and $x_2$ represent the ages of the two last events, so these ages
increase linearly until, at rate $d(x_1,x_2)$, they are reset to the values $(0,x_1)$.
Hence our state space is now $\cJ = \{(x_1,x_2) \in [0,\infty)^2 : x_2 > x_1\}$.
We assume that
\begin{align}\label{da1}
d \in C(\cJ),\quad d\geq 0
\end{align}
and that 
\begin{align}\label{da2}
\exists \; a>0 \quad \hbox{such that}\quad a \leq \inf_{ 2 |x_1-\wt x_1 |+|x_2- \wt x_2|\leq a} \f{ ( | x_1- \wt x_1| +|x_2- \tilde x_2| )  \max (d(x_1, x_2), d( \wt x_1, \wt x_2))} { |d(x_1, x_2)-d(\wt x_1,  \wt x_2)|} .
\end{align}
One can check that $d(x_1,x_2)= \alpha + \beta x_1^{p_1}+ \gamma  x_2^{p_2}$, with
$\alpha>0$, $\beta\geq 0$, $\gamma\geq 0$, $p_1\geq 1$ and $p_2\geq 1$ satisfies such an assumption,
as well as Lipschitz and uniformly positive functions.

\begin{theorem} We assume \eqref{da1}-\eqref{da2}.
We consider the cost on $\cJ\times \cJ$ defined by
\[
\varrho(x_1,x_2,\tx_1,\tx_2) = \big[2|x_1-\wt x_1| +  |x_2-\wt x_2| \big]\wedge a.
\]
For any $u_0^1,u_0^2 \in \cP(\cJ)$,
there exists a pair of weak measure solutions
$(u_t^1)_{t\geq 0},(u_t^2)_{t\geq 0} \subset \cP(\cJ)$ to \eqref{eq.multirenewal} starting from $u_0^1$ and
$u_0^2$, i.e., such that for $i=1,2$ and  all $t\geq 0$
\begin{gather}
\int_0^t  \hskip-5pt  \int_\cJ \! d(x_1,x_2)u_s^i(\dd x_1,\dd x_2)\dd s<\infty  \label{dww1}
\end{gather}
and for  $i=1,2$, all $t\geq 0$ and all $\varphi \in C^1_c(\cJ)$,
\begin{align}
\int_\cJ \varphi(x_1&,x_2)u_t^i(\dd x_1,\dd x_2) = \int_\cJ \!\varphi(x_1,x_2)u_0^i(\dd x_1,\dd x_2)\notag\\ 
&+\intot \hskip-5pt \int_\cJ \!\Big[\frac{\p \varphi(x_1,x_2)}{\p x_1}+ \frac{\p \varphi(x_1,x_2)}{\p x_2}
+d(x_1,x_2) [\varphi(0,x_1)-\vp(x_1,x_2)] \Big] 
u_s^i(\dd x_1,\dd x_2)\dd s. \label{dww2}
\end{align}
Moreover, for all $t\geq 0$, we have
$$
{\mathcal T}_\varrho (u^1_t,u^2_t) \leq {\mathcal T}_\varrho(u^1_0, u^2_0).
$$
\label{th:tcMultiTimet}
\end{theorem}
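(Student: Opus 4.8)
The plan is to repeat, on the product domain $\cJ^2$, the coupling scheme of Sections~\ref{sec:RenewalEq}--\ref{sec:severaltimes}. Fix $u_0^1,u_0^2\in\cP(\cJ)$ and a coupling $v_0\in{\mathcal H}(u_0^1,u_0^2)$. First I would build a family $(v_t)_{t\geq0}\subset\cP(\cJ^2)$ starting from $v_0$, with
$$\intot\intt[d(x_1,x_2)+d(\tx_1,\tx_2)]v_s(\dd x_1,\dd x_2,\dd\tx_1,\dd\tx_2)\dd s<\infty,$$
weakly solving, for all $\vp\in C^1_c(\cJ^2)$ and $t\geq0$ (writing $\vp$, $d_1$, $d_2$ for $\vp(x_1,x_2,\tx_1,\tx_2)$, $d(x_1,x_2)$, $d(\tx_1,\tx_2)$),
\begin{align*}
\intt\vp\, v_t=&\ \intt\vp\, v_0+\intot\intt\Big[\f{\p\vp}{\p x_1}+\f{\p\vp}{\p x_2}+\f{\p\vp}{\p\tx_1}+\f{\p\vp}{\p\tx_2}\Big]v_s\,\dd s\\
&+\intot\intt\Big(\big[\vp(0,x_1,0,\tx_1)-\vp\big](d_1\land d_2)+\big[\vp(0,x_1,\tx_1,\tx_2)-\vp\big](d_1-d_2)_+\\
&\hspace{2.4cm}+\big[\vp(x_1,x_2,0,\tx_1)-\vp\big](d_2-d_1)_+\Big)v_s\,\dd s.
\end{align*}
The three jump terms describe a simultaneous firing of the two populations (at rate $d_1\land d_2$), a firing of the first one alone, and a firing of the second one alone, each firing sending the age pair $(x_1,x_2)$ to $(0,x_1)$. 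Testing with $\vp$ depending only on $(x_1,x_2)$ and using $(d_1\land d_2)+(d_1-d_2)_+=d_1$ shows that the first marginal $u^1_t$ of $v_t$ satisfies \eqref{dww1}--\eqref{dww2}; the symmetric choice does the same for $u^2_t$. In particular $v_t\in{\mathcal H}(u^1_t,u^2_t)$ for every $t\geq0$.

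Next I would obtain existence of $(v_t)$ exactly as before: regularize $d$ into bounded continuous coefficients (for which the coupled equation is standard) and pass to the limit, the only point being a uniform tightness estimate. By the de la Vall\'ee Poussin theorem, take a smooth $h:[0,\infty)\to[0,\infty)$ with $h(0)=0$, $0\leq h'\leq1$, $h(x)\to\infty$ and $C:=\intt[h(x_1)+h(x_2)+h(\tx_1)+h(\tx_2)]v_0<\infty$, and test with $\vp=h(x_1)+h(x_2)+h(\tx_1)+h(\tx_2)$. Since $h(0)=0$, and since each firing resets the first age to $0$ and the second age to the old first age, the sum of the three jump terms collapses to $-h(x_2)d_1-h(\tx_2)d_2$, while the transport term is at most $4$; hence
$$\intt[h(x_1)+h(x_2)+h(\tx_1)+h(\tx_2)]v_t+\intot\intt\big[h(x_2)d_1+h(\tx_2)d_2\big]v_s\,\dd s\leq C+4t,$$
which, since $h\to\infty$ and $d$ is continuous, yields both the tightness of $(v_t)_{t\in[0,T]}$ and the moment bound above.

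For the contraction, choose $v_0$ with $\intt\varrho\, v_0=\cT_\varrho(u^1_0,u^2_0)$ and apply the weak formulation with $\vp=\varrho$, after first replacing $\varrho$ by a $C^1_c$ regularization and then letting the regularization disappear. Since $\varrho(x_1,x_2,\tx_1,\tx_2)=\big(2|x_1-\tx_1|+|x_2-\tx_2|\big)\land a$ depends on $(x_1,x_2,\tx_1,\tx_2)$ only through $x_1-\tx_1$ and $x_2-\tx_2$, the transport term $\f{\p\varrho}{\p x_1}+\f{\p\varrho}{\p x_2}+\f{\p\varrho}{\p\tx_1}+\f{\p\varrho}{\p\tx_2}$ vanishes. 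Using $\cT_\varrho(u^1_t,u^2_t)\leq\intt\varrho\, v_t$, it then remains to check that, at every point of $\cJ^2$,
\begin{align*}
\Delta:=&(d_1\land d_2)\big[\varrho(0,x_1,0,\tx_1)-q\big]+(d_1-d_2)_+\big[\varrho(0,x_1,\tx_1,\tx_2)-q\big]\\
&+(d_2-d_1)_+\big[\varrho(x_1,x_2,0,\tx_1)-q\big]\leq0,
\end{align*}
with $q:=\big(2|x_1-\tx_1|+|x_2-\tx_2|\big)\land a$. By symmetry assume $d_1\geq d_2$, so the third bracket drops. If $2|x_1-\tx_1|+|x_2-\tx_2|\geq a$ then $q=a$, both surviving brackets are $\leq0$, and $\Delta\leq0$. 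If $2|x_1-\tx_1|+|x_2-\tx_2|\leq a$ then $q=2|x_1-\tx_1|+|x_2-\tx_2|$ and $\varrho(0,x_1,0,\tx_1)=|x_1-\tx_1|$, so the first bracket equals $-|x_1-\tx_1|-|x_2-\tx_2|$; bounding $\varrho(0,x_1,\tx_1,\tx_2)\leq a$ in the second one gives
$$\Delta\leq(d_1-d_2)a-(2d_1-d_2)|x_1-\tx_1|-d_1|x_2-\tx_2|.$$
By \eqref{da2} on the set $\{2|x_1-\tx_1|+|x_2-\tx_2|\leq a\}$ we have $(d_1-d_2)a\leq d_1\big(|x_1-\tx_1|+|x_2-\tx_2|\big)$, and since $d_1\geq d_2$ the right-hand side is $\leq(2d_1-d_2)|x_1-\tx_1|+d_1|x_2-\tx_2|$; hence $\Delta\leq0$. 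The only genuinely delicate step is this last verification: what makes it work is the weight $2$ in front of $|x_1-\tx_1|$ in $\varrho$ (equivalently, the sublevel set $\{2|x_1-\tx_1|+|x_2-\tx_2|\leq a\}$ used in \eqref{da2}) — when a single population fires, its first age jumps to $0$ and its second age inherits the old first age, so a discrepancy in the first coordinate is transferred to the second one, and the factor $2$ is precisely the slack that absorbs this transfer; everything else is bookkeeping parallel to Theorems~\ref{th:tc:bm} and \ref{th:tcRenSA}.
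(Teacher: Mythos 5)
Your proposal is correct and follows essentially the same path as the paper: identical coupling equation with the three jump terms, the same tightness test function $h(x_1)+h(x_2)+h(\tx_1)+h(\tx_2)$ yielding the same moment bound, and the same $\Delta\le 0$ verification (yours with a WLOG reduction $d_1\geq d_2$, the paper's kept symmetric, but algebraically equivalent and both resting on assumption \eqref{da2} applied to $(d_1\vee d_2)(|x_1-\tx_1|+|x_2-\tx_2|)\geq a|d_1-d_2|$).
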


\begin{proof} We assume \eqref{da1}-\eqref{da2}, consider $u^1_0,u^2_0 \in \cP(\cJ)$
and a coupling $v_0 \in {\mathcal H}(u^1_0,u^2_0)$. There exists a family $(v_t)_{t\geq 0}$ of probability
measures on $\cJ^2$ such that for all $t\geq 0$ 
\begin{align}\label{dss1}
\int_0^t  \hskip-5pt  \intt [d(x_1,x_2)+d(\tx_1,\tx_2)] v_t(\dd x_1,\dd x_2,\dd \tx_1, \dd \tx_2)<\infty
\end{align}
and which weakly solves, with zero flux boudary conditions at $x_1=0$ and $x_2=0$,
\begin{align*}
\f{\p v_t}{\p t}   + \f{\p v_t}{\p x_1}  + \f{\p v_t}{\p x_2} &+ \f{\p v_t}{\p  \wt  x_1}  + \f{\p v_t}{\p  \wt  x_2}   + (d(x_1,x_2)\vee d(\wt x_1, \wt x_2))  v_t
\\[10pt]
&=\delta(x_1)  \delta( \wt x_1)   \intt   \big(d(x_2,z)\vee d(\wt x_2, \wt z) \big)  v_t(x_2, \dd z,  \wt x_2, \dd \wt z)  
\\
&\quad + \delta(x_1)    \intt   \big( d (x_2,z) -d(\wt x_1, \wt x_2) \big)_+  v_t(x_2,\dd z, \wt x_1, \wt x_2)
\\
&\quad + \delta( \wt x_1)    \intt   \big( d(x_1,x_2)- d(\wt x_2, \wt z) \big)_+ v_t(x_1, x_2, \wt x_2, \dd \wt z).
\end{align*}
This means  that for all $\vp \in C^1_c(\cJ^2)$, all $t\geq 0$,
\begin{align}
&\intt \vp(x_1,x_2,\tx_1,\tx_2)v_t( \dd x_1,\dd x_2,\dd \tx_1, \dd \tx_2)=
\intt \vp(x_1,x_2,\tx_1,\tx_2)v_0(\dd x_1,\dd x_2,\dd \tx_1, \dd \tx_2)\notag\\
&\hskip0.5cm+ \intot \hskip-5pt \intt \Big[\f{\p \vp}{\p x_1} + \f{\p \vp}{\p x_2}  + \f{\p \vp}{\p \tx_1} + \f{\p \vp}{\p \tx_2}
\Big](x_1,x_2,\tx_1,\tx_2)v_s(\dd x,\dd z,\dd y, \dd r) \dd s \notag\\
&\hskip0.5cm+\intot \hskip-5pt \intt \Big((d(x_1,x_2)\land d(\tx_1,\tx_2))[\vp(0,x_1,0,\tx_1)-\vp(x_1,x_2,\tx_1,\tx_2)]\notag\\
&\hskip2cm + (d(x_1,x_2)-d(\tx_1,\tx_2))_+[\vp(0,x_1,\tx_1,\tx_2)-\vp(x_1,x_2,\tx_1,\tx_2)]\label{dss2}\\
&\hskip2cm + (d(\tx_1,\tx_2)-d(x_1,x_2))_+[\vp(x_1,x_2,0,\tx_1)-\vp(x_1,x_2,\tx_1,\tx_2)]
\Big) v_s(\dd x_1,\dd x_2,\dd \tx_2, \dd \tx_2) \dd s. \notag 
\end{align}
Using \eqref{dss1} and \eqref{dss2} with a function $\vp$ depending only on $(x_1,x_2)$ (or $(\tx_1,\tx_2)$),
we see that the marginals of $v$ satisfy \eqref{dww1} and \eqref{dww2}.

\vip
The existence for \eqref{dss1}-\eqref{dss2} follows as usual from the following {\it a priori} tightness estimate.
By the de la Vall\'ee Poussin theorem, there is a function $h:[0,\infty)\to [0,\infty)$ such that
$\lim_{x\to \infty} h(x)=\infty$ and
$$
C:=\intt [h(x_1)+h(x_2)+h(\tx_1)+h(\tx_2)] v_0(\dd x_1,\dd x_2,\dd \tx_1,\dd \tx_2)<\infty.
$$
Choosing moreover $h$ smooth, satisfying $h(0)=0$ and $0 \leq h'\leq 1$, applying \eqref{dss2} with 
$ \vp(x_1,x_2,\tx_1,\tx_2)= h(x_1)+h(x_2)+h(\tx_1)+h(\tx_2)$, one easily concludes as usual
that for all $t\geq 0$,
\begin{align*}
&\intt [h(x_1)+h(x_2)+h(\tx_1)+h(\tx_2)] \, v_t(\dd x_1,\dd x_2,\dd \tx_1,\dd \tx_2)\\
&\hskip0.5cm+ \intot \hskip-5pt \intt \big(d(x_1,x_2) h(x_2)+d(\tx_1,\tx_2) h(\tx_2) \big)
v_s(\dd x_1,\dd x_2,\dd \tx_1,\dd \tx_2)\dd s \leq C + 4t.
\end{align*}

Recalling that $h$ increases to infinity and that $h(x_1)\leq h(x_2)$ for all $(x_1,x_2)\in \cJ$,
this {\it a priori} tightness estimate is enough to prove existence for \eqref{dss1}-\eqref{dss2}.

\vip
We now apply \eqref{ss2} with $\vp=\varrho$, where
$\varrho(x_1,x_2,\tx_1,\tx_2)= \min( a, 2| x_1-\tx_1|+|x_2-\tx_2|)$. Since
$$
\f{\p \vp}{\p x_1} + \f{\p \vp}{\p x_2}  + \f{\p \vp}{\p \tx_1} + \f{\p \vp}{\p \tx_2}=0,
$$
we find, choosing $v_0$ such that $\int\hskip-5pt\int \varrho(x_1,x_2,\tx_1,\tx_2)
v_0(\dd x_1,\dd x_2,\dd \tx_1, \dd \tx_2)=\cT_\rho(u^1_0,u^2_0)$,
\begin{align*}
\intt \vr(x_1,x_2,\tx_1,\tx_2)v_t( \dd x_1,\dd x_2,\dd \tx_1, \dd \tx_2)
\leq \cT_\rho(u^1_0,u^2_0)
+\! \intot \hskip-5pt \intt \Delta(x_1,x_2,\tx_1,\tx_2)v_s(\dd x_1,\dd x_2,\dd \tx_1, \dd \tx_2) \dd s,
\end{align*}
where
\begin{align*}
\Delta(x_1,x_2,\tx_1,\tx_2)=&-(d(x_1,x_2)\vee d(\wt x_1, \wt x_2))([ 2 |x_1-\wt x_1| + |x_2-\wt x_2 | ] \wedge a)
\\
&+(d(x_1,x_2)\land d(\wt x_1, \wt x_2)) ( |x_1-\tx_1|\land a)\\
&+ \big( d( x_1,  x_2) -d(\wt x_1, \wt x_2) \big)_+   ([2 |\wt x_1| +| x_1- \wt x_2|] \wedge a )
\\
&+ \big( d( \tx_1,  \tx_2) -d(x_1, x_2) \big)_+   ([2 |x_1| +| \tx_1- x_2|] \wedge a).
\end{align*}
Since $\cT_\rho(u^1_t,u^2_t)\leq \int\hskip-5pt\int 
\rho(x_1,x_2,\tx_1,\tx_2)v_t( \dd x_1,\dd x_2,\dd \tx_1, \dd \tx_2)$, 
it only remains to check that $\Delta$ is always non-positive. 
\vip
When first $2 |x_1-\wt x_1| + |x_2-\wt x_2 |\geq a$, this is obvious.
\vip
When $2 |x_1-\wt x_1| + |x_2-\wt x_2 |\leq a$, it suffices to verify that
\begin{align*}
(d(x_1,x_2)\vee d(\wt x_1, \wt x_2))&[ 2 |x_1-\wt x_1| + |x_2-\wt x_2 | ]\\
\geq& (d(x_1,x_2)\land d(\wt x_1, \wt x_2)) |x_1-\tx_1| + |d( x_1,  x_2) -d(\wt x_1, \wt x_2)| a. 
\end{align*}
This follows from the fact that
$$
(d(x_1,x_2)\vee d(\wt x_1, \wt x_2))[|x_1-\wt x_1| + |x_2-\wt x_2 | ]
\geq |d( x_1,  x_2) -d(\wt x_1, \wt x_2)| a
$$
by \eqref{da2}.
\end{proof} \qed

\section{Growth-fragmentation}
\label{sec:GF}

The growth-fragmentation equation arises in several areas of biology. The variable represents for
instance the size of cells or the length  of biopolymers. It also arises in
communication science for TCP connections. A large literature is available  on the subject and we refer 
for instance to \cite{MD_LN68, DDGW2019, Monmarche2015, BertoinW2020}.
The model combines growth with a rate $g$ and fragmentation with a rate $d$ and it is written
\begin{equation} \left\{
\begin{array}{l}
\dis \f{\p u_t(x)}{\p t} + \f{\p [g(x)\, u_t(x)]}{\p x} +d(x) \, u_t(x) =  
\int_x^\infty  d(x') \kappa(x,x')  \, u_t(\dd x') , \qquad t\geq 0, \; x\geq 0,
\\[8pt]
u_t(x=0) =  0, \hskip8.9cm t\geq 0.
\end{array} \right.
\label{eq:sizestructN}
\end{equation}
Usual conditions on the fragmentation kernel are expressed through the identities 
\begin{equation}  
\kappa(x,x') = 0  \; \text{ for } x>x', \qquad \quad   \int_0^{x'} \kappa(x,x') \dd x = 1 
\label{eq:ssNas1}
\end{equation}
which lead to the conservation law
\[ 
\int_0^\infty u_t(x) \dd x = \int_0^\infty u_0 (x) \dd x=1.
\]
To go further, we can specify
\beq
\kappa(x,x')  = \f 1 {x'} \,  \beta\Big(\f x {x'}\Big) \quad \hbox{for some} \quad \beta \in \cP([0,1]),
\label{tc:gfkernel}
\eeq
see the end of the section for a more general possible setting. We then assume that
\begin{equation} 
g, \, d \in C([0, \infty)), \quad g \text{ is non-increasing}, \quad g(0) \geq 0,  \quad d\geq 0,
\label{eq:sizestructAsA}
\end{equation}
and
\beq
\exists \; a >0 \quad \text{such that} \quad  a \leq \left(1 -  \int_0^1 r \beta(\dd r) \right)  
\inf_{|x-y]\leq a} \f{ |x-y|\max (d(x), d(y)) } { |d(x)-d(y)|}.
\label{tc:gfas1}
\eeq
If $\beta$ is non-trivial in that $\int_0^1 r \beta(\dd r)\in [0,1)$, then this assumption
is verified if e.g. $d(x)=\alpha+\beta x^p$, provided $\alpha>0$, $\beta\geq 0$ and $p\geq 1$.

 %
\begin{theorem} We assume \eqref{tc:gfkernel}-\eqref{eq:sizestructAsA}-\eqref{tc:gfas1}.
We choose again, on $[0,\infty) \times [0,\infty)$ the cost
$$
\varrho(x,y) = \min( a, | x-y|) .
$$
For any $u_0^1,u_0^2 \in \cP([0,\infty))$, there exists a pair of weak measure solutions
$(u_t^1)_{t\geq 0},(u_t^2)_{t\geq 0} \subset \cP([0,\infty))$ to \eqref{eq:sizestructN}, starting from $u_0^1$ and
$u_0^2$, i.e., such that for $i=1,2$, for all $t\geq 0$, all $A\geq 1$,
\begin{gather}
\intot \int_{A+1}^\infty d(x) \int_0^{A/x}\beta(\dd r) u_s^i(\dd x) \dd s  <\infty \label{gfww1}
\end{gather}
and for all $t\geq 0$, all $\varphi \in C^1_c([0,\infty))$,
\begin{gather}
\int_0^\infty  \hskip-5pt  \varphi(x)u_t^i(\dd x) \!= \!\int_0^\infty \!\varphi(x)u_0^i(\dd x) +\! \intot \hskip-5pt \int_0^\infty \!
\Big[g(x)\varphi'(x)+ d(x)\! \int_0^1( \varphi(r x)-\varphi(x))\beta(\dd r) \Big] u_s^i(\dd x)\dd s.  
\label{gfww2}
\end{gather}
Moreover, for all $t\geq 0$, we have 
$$
{\mathcal T}_\varrho (u^1_t, u^2_t) \leq {\mathcal T}_\varrho(u^1_0, u^2_0).
$$
\label{th:tcgf}
\end{theorem}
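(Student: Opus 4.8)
The plan is to mimic exactly the coupling construction used for the renewal equation in Section~\ref{sec:RenewalEq}. First I would fix $u_0^1,u_0^2\in\cP([0,\infty))$ and an arbitrary coupling $v_0\in\cH(u_0^1,u_0^2)$, and build a family $(v_t)_{t\ge0}$ on $[0,\infty)^2$ solving, in the weak sense, the coupled equation whose generator acts on test functions $\vp\in C^1_c([0,\infty)^2)$ by
\begin{align*}
&g(x)\p_x\vp(x,y)+g(y)\p_y\vp(x,y)
+\int_0^1\bigl[\vp(rx,ry)-\vp(x,y)\bigr](d(x)\wedge d(y))\beta(\dd r)\\
&\quad+\int_0^1\bigl[\vp(rx,y)-\vp(x,y)\bigr](d(x)-d(y))_+\beta(\dd r)
+\int_0^1\bigl[\vp(x,ry)-\vp(x,y)\bigr](d(y)-d(x))_+\beta(\dd r).
\end{align*}
The point of this particular coupling is that when $d(x)\wedge d(y)$ of the mass fragments, both coordinates are multiplied by the \emph{same} random ratio $r\sim\beta$, while the excess rate $|d(x)-d(y)|$ fragments only the coordinate with larger death rate. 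As in the earlier sections, one checks using a test function depending on $x$ alone (resp.\ $y$ alone) and the identity $(d(x)\wedge d(y))+(d(x)-d(y))_+=d(x)$ that the first marginal $u_t^1$ and second marginal $u_t^2$ each solve \eqref{gfww2}, and that $v_t\in\cH(u_t^1,u_t^2)$ for all $t$; note that with the change of variables $x'=x/r$ the fragmentation term $\int_0^1(\vp(rx)-\vp(x))\beta(\dd r)$ in \eqref{gfww2} is the weak form of $\int_x^\infty d(x')\kappa(x,x')u_t(\dd x')$ under \eqref{tc:gfkernel}. Existence of $(v_t)$ follows, as in the previous proofs, by regularising $g,d$ and using a de la Vall\'ee Poussin / tightness estimate; here $g$ non-increasing controls the drift, and applying the coupled equation to $h(x)+h(y)$ with $h$ smooth, $0\le h'\le1$, $h(0)=0$ and $h\to\infty$ gives a term $d(x)\bigl(\int_0^1 h(rx)\beta(\dd r)-h(x)\bigr)\le d(x)(h(x)-h(x))=0$ by monotonicity of $h$ (since $r\le1$), so the estimate is if anything easier than in Section~\ref{sec:RenewalEq}; the weak integrability statement \eqref{gfww1} comes out of the same computation.

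Next I would run the contraction argument: choose $v_0$ optimal for $\cT_\varrho$, apply the weak equation to (a smooth approximation of) $\varrho(x,y)=\min(a,|x-y|)$, and use $g(x)\p_x\varrho+g(y)\p_y\varrho=\ind{|x-y|\le a}\sgn(x-y)[g(x)-g(y)]\le0$ to drop the transport term. Since $\cT_\varrho(u_t^1,u_t^2)\le\iint\varrho\,v_t$, it then suffices to show that the fragmentation contribution at each $(x,y)$ is $\le0$, i.e.
\begin{align*}
&(d(x)\wedge d(y))\int_0^1\bigl[\varrho(rx,ry)-\varrho(x,y)\bigr]\beta(\dd r)
+(d(x)-d(y))_+\int_0^1\bigl[\varrho(rx,y)-\varrho(x,y)\bigr]\beta(\dd r)\\
&\quad+(d(y)-d(x))_+\int_0^1\bigl[\varrho(x,ry)-\varrho(x,y)\bigr]\beta(\dd r)\le0.
\end{align*}
For the first term I would use $\varrho(rx,ry)=\min(a,r|x-y|)\le r\,\varrho(x,y)$, so $\varrho(rx,ry)-\varrho(x,y)\le-(1-r)\varrho(x,y)$ and after integrating in $\beta$ we gain a factor $-(1-\int_0^1 r\beta(\dd r))\varrho(x,y)$. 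For the two ``unbalanced'' terms I bound crudely $\varrho(rx,y)-\varrho(x,y)\le a$ and $\varrho(x,ry)-\varrho(x,y)\le a$, so their sum is at most $|d(x)-d(y)|\,a$. Writing $m=\int_0^1 r\beta(\dd r)\in[0,1)$, the whole expression is $\le -(d(x)\wedge d(y))(1-m)\varrho(x,y)+|d(x)-d(y)|\,a$, and so it is enough to check $(1-m)(d(x)\vee d(y))\varrho(x,y)\ge|d(x)-d(y)|\,a$: indeed replacing $d(x)\wedge d(y)$ by $d(x)\vee d(y)$ only weakens what we must prove because $\varrho\ge0$, wait — one must be a little careful and instead keep $(d(x)\wedge d(y))$, but since $\varrho(x,y)=\min(a,|x-y|)$ and $(d(x)\wedge d(y))\le(d(x)\vee d(y))$ the needed inequality reduces, exactly as in Theorem~\ref{th:tc:bm}, to $(1-m)\dfrac{|x-y|\max(d(x),d(y))}{|d(x)-d(y)|}\ge a$ when $|x-y|\le a$ (the case $|x-y|\ge a$ being immediate once one checks $\varrho(x,y)=a$ there), which is precisely \eqref{tc:gfas1}.

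The main obstacle I anticipate is the bookkeeping in the first term: one needs the genuine inequality $\varrho(rx,ry)-\varrho(x,y)\le -(1-r)\varrho(x,y)$ for $r\in[0,1]$ (true since $\min(a,r s)\le r\min(a,s)$ for $s\ge0$), and then one must make sure the remaining $-(d(x)\wedge d(y))(1-m)\varrho(x,y)$ genuinely absorbs $|d(x)-d(y)|a$ — i.e.\ that it is the $\min$ and not the $\max$ of the death rates that multiplies $\varrho$. This is why \eqref{tc:gfas1} carries the extra factor $1-\int_0^1 r\beta(\dd r)$ compared with \eqref{renewMKas2}: it must beat the fragmentation rate and still leave room for the unbalanced part. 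A secondary technical point, already flagged in the earlier sections, is the regularisation of the non-smooth, non-compactly-supported $\varrho$ and the passage to the limit $\e\to0$, together with the approximation of unbounded $g,d$; both are handled exactly as before and I would simply refer to the argument in the proof of Theorem~\ref{th:tc:bm}. The comments following Theorem~\ref{th:tc:bm} about non-uniqueness of the selected solutions, and uniqueness in distributional sense when $g\in C^1_b$, apply verbatim here.
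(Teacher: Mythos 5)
The overall plan — building the coupled equation with the ``same $r$'' fragmentation for the matched mass and one-sided fragmentation at rate $(d(x)-d(y))_+$, reading off the marginals, getting tightness from a de la Vall\'ee Poussin function, and then reducing the contraction to $\Delta(x,y)\le 0$ for the cost $\varrho$ — is exactly the paper's. But the verification of $\Delta\le 0$ contains two genuine errors. First, the inequality $\varrho(rx,ry)=\min(a,r|x-y|)\le r\,\varrho(x,y)$ is false: since $s\mapsto\min(a,s)$ is concave, one has $\min(a,rs)\ge r\min(a,s)$ for $r\in[0,1]$ (take $a=1$, $s=2$, $r=0.3$: $\min(1,0.6)=0.6>0.3=r\min(1,2)$). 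It holds \emph{with equality} when $|x-y|\le a$, which is the only regime where you actually need it, but your stated justification is wrong and using it when $|x-y|>a$ would go in the wrong direction.

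Second, and more seriously, the derivation that follows does not close. Your bound $\Delta\le -(1-m)(d(x)\wedge d(y))\varrho(x,y)+a|d(x)-d(y)|$ has the wrong extremum: assumption \eqref{tc:gfas1} controls $(1-m)(d(x)\vee d(y))|x-y|$, not $(1-m)(d(x)\wedge d(y))|x-y|$, and the two are not comparable in the direction you need (take $d(y)=0$: your bound demands $0\ge a\,d(x)$). The sentence ``replacing $d(x)\wedge d(y)$ by $d(x)\vee d(y)$ only weakens what we must prove'' is backwards, you notice this, but the subsequent claim that the needed inequality nonetheless ``reduces'' to \eqref{tc:gfas1} is simply unjustified. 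The cure, as in the paper, is to work case by case. When $|x-y|\ge a$ everything is trivial since $\int_0^1\varrho(\cdot,\cdot)\beta(\dd r)\le a=\varrho(x,y)$ and $(d\vee d)=(d\wedge d)+|d-d|$ gives $\Delta\le 0$ directly. When $|x-y|\le a$ use the \emph{equality} $\int_0^1\varrho(rx,ry)\beta(\dd r)=m|x-y|$, keep the $-(d(x)\vee d(y))\varrho$ term explicit, and then exploit $(d(x)\vee d(y))-m(d(x)\wedge d(y))\ge(1-m)(d(x)\vee d(y))$ to land exactly on \eqref{tc:gfas1}. (Equivalently, in your one-sided pieces bound $\varrho(rx,y)-\varrho(x,y)\le a-\varrho(x,y)$ rather than by $a$; this recovers the extra $-|d(x)-d(y)|\varrho(x,y)$ which, added to $-(1-m)(d\wedge d)\varrho$, gives $-(1-m)(d\vee d)\varrho$.)
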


Observe that \eqref{gfww2} makes sense thanks to \eqref{gfww1}:
for $\varphi \in C^1_c([0,\infty))$ supported in $[0,A]$, 
$$
\Big|d(x)\! \int_0^1( \varphi(r x)-\varphi(x))\beta(\dd r)\Big| \leq \ind{x\leq A+1} 2\|\vp\|_\infty \sup_{[0,A+1]} d 
+ \ind{x>A+1} \| \vp \|_\infty d(x)\int_0^{A/x} \beta(\dd r),
$$
where we used a rough upper bound by $A+1$ to fit our assumption.
\\

\begin{proof} 
Consider $u^1_0,u^2_0 \in \cP([0,\infty))$
and a coupling $v_0 \in {\mathcal H}(u^1_0,u^2_0)$. There exists a family $(v_t)_{t\geq 0}$ of probability
measures on $[0,\infty)^2$ satisfying, for all $t\geq 0$, all $A\geq 1$,
\begin{gather}\label{etac}
\intot \intt  \Big[\ind{x\geq A+1}d(x) \int_0^{A/x}\beta(\dd r)+\ind{y\geq A+1}d(y) 
\int_0^{A/y}\beta(\dd r) \Big] v_s(\dd x,\dd y) \dd s <\infty
\end{gather}
and solving weakly
\begin{align*}
\dis \f{\p v_t}{\p t} &+ \f{\p}{\p x} \big[g(x)\, v_t\big] + \f{\p}{\p y} \big[g(y)\, v_t\big] +
(d(x)\vee d(y)) \,v_t 
= 
\\&  \intt  (d(x')\wedge d(y'))  \f{1}{x'y'} \beta \big( \f{x}{x'}  \big) \delta \big( \f{x}{x'}- \f{y}{y'}   \big)
 \beta \big( \f{x}{x'}  \big)   \, v_t(\dd x',\dd y')  
\\
&+ \int  (d(x') -d(y))_+ \f{1}{x'} \beta \big( \f{x}{x'}  \big)  v_t(\dd x',y)
  + \int  (d(y') - d(x))_+  \f{1}{y'} \beta \big( \f{y}{y'}  \big)  v(x,\dd y').
\end{align*}
This means that for all $t\geq 0$, all $\varphi \in C^1_c([0,\infty)^2)$,
\begin{align}
\intt& \vp(x,y)  v_t(\dd x,\dd y) =  \intt \!\varphi(x,y)v_0(\dd x, \dd y)  + \int_0^t  \hskip-5pt \intt   \Big[g(x)  \f{\p \vp(x,y) }{\p x} + g(y) \f{\p \vp(x,y) }{\p y} \Big] v_s(\dd x,\dd y) \dd s\notag \\ 
& +  \intot \intt  (d(x)\wedge d(y))    \int_0^1  \big[ \vp(r x,r y) -  
\vp(x,y) \big]\beta(\dd r) v_s(\dd x,\dd y) \dd s \notag \\[5pt]
& + \intot \intt  (d(x)- d(y))_+  \int_0^1 \big[ \vp(rx,y) - \vp(x,y) \big] \beta(\dd r)v_s(\dd x,\dd y)  \dd s
\notag  \\[5pt]
& + \intot \intt (d(y)- d(x))_+  \int_0^1  \big[ \vp(x,ry) - \vp(x,y) \big]\beta(\dd r)v_s(\dd x,\dd y) \dd s.
\label{etac2}
\end{align}
One checks as usual that the two marginals of $v_t$ solve \eqref{gfww1}-\eqref{gfww2}. Next, as in the previous sections, 
one finds  that there is a function $h:[0,\infty)\to[0,\infty)$,
strictly increasing to infinity, which taken as a test function gives
\begin{align*}
\intt &[h(x)+h(y)]v_t(\dd x,\dd y)
+ \intot \intt \big[ d(x) H(x) + d(\tx) H(\tx) \big] v_s(\dd x,\dd y) \dd s   \leq C + 2\overline{C} t,
\end{align*}
where $H(x)= \int_0^1 (h(x)-h(rx))\beta(\dd r)$ and 
$\overline{C}= \sup_{x \geq 0} g(x) h'(x)$. This {\it a priori} estimate is sufficient to prove tightness and construct a solution to 
\eqref{gfww1}-\eqref{gfww2}, because for any $A\geq 1$, any $x\geq A+1$, one has
$$
\int_0^{A/x} \beta(\dd r) \leq \int_0^{A/x} \frac{h(x)-h(rx)}
{h(A+1)-h(A)} \beta(\dd r) \leq \frac{H(x)}{h(A+1)-h(A)}.
$$

Applying now the above equation to $\varphi=\vr$, where $\varrho(x,y) = \min( a, | x-y|)$, one finds as usual,
if choosing $v_0$ correctly and using that $g$ is non-increasing, that
$$
\intt \vr(x,y)  v_t(\dd x,\dd y) \leq \cT_\vr(u^1_0,u^2_0) + \intot \intt \Delta(x,y)v_s(\dd x,\dd y) \dd s,
$$
where
\begin{align}
\Delta(x,y)=&-(d(x) \vee d(y)) \varrho(x,y) 
+ (d(x) \wedge d(y)) \int_0^1  \varrho(rx,ry) \beta(\dd r)\notag
\\[8pt]
&+(d(x)- d(y))_+  \int_0^1 \varrho(rx,y)\beta(\dd r)+(d(y)- d(x))_+  \int_0^1 \varrho(x,ry)\beta(\dd r).
\label{tc:gfcond}
\end{align}
We finally show that $\Delta$ is alway non-positive. If first $|x-y|\geq a$, it is enough to
check that
$$
a (d(x)\lor d(y)) \geq a(d(x)\land d(y)) + a (d(x)- d(y))_++ a (d(y)- d(x))_+,
$$
which is obvious.  If next $|x-y|\leq a$, it suffices to check that
$$
(d(x)\lor d(y))|x-y| \geq (d(x)\land d(y)) \Big[\int_0^1 r\beta(\dd r)\Big]|x-y| + 
a (d(x)- d(y))_++ a (d(y)- d(x))_+,
$$
which follows from the fact that
$$
\Big[1-\int_0^1 r\beta(\dd r)\Big](d(x)\lor d(y))|x-y| \geq a |d(x)- d(y)|
$$
thanks to \eqref{tc:gfas1}.
\end{proof} \qed

\vip

A little study shows that the result still holds true if assuming, instead of \eqref{tc:gfkernel},
that the family of fragmentation kernels $(\kappa(\cdot,x))_{x\in \R_+} \subset \cP(\R_+)$ satisfies
$\kappa([0,x],x)=1$ for all $x\geq 0$ and 
$$
\exists \; m\in [0,1), \quad \hbox{for all $x,y\in\R_+$,} \quad W_1(\kappa(\cdot,x),\kappa(\cdot,y)) \leq m|x-y|,
$$
where $W_1$ is the usual Monge-Kantorovich distance on $\cP(\R_+)$, and
replacing \eqref{tc:gfas1} by
$$
\exists \; a >0 \quad \text{such that} \quad  a \leq (1 - m)
\inf_{|x-y]\leq a} \f{ |x-y|\max (d(x), d(y)) } { |d(x)-d(y)|}.
$$
Indeed, it suffices to apply the usual strategy, starting from the coupling equation :
\begin{align*}
\intt \vp(x,y)  &v_t(\dd x,\dd y) =  \intt \!\varphi(x,y)v_0(\dd x, \dd y)  + \int_0^t  \hskip-5pt \intt   \Big[g(x)  \f{\p \vp(x,y) }{\p x} + g(y) \f{\p \vp(x,y) }{\p y} \Big] v_s(\dd x,\dd y) \dd s \\ 
& +  \intot \intt  (d(x)\wedge d(y))    \int_0^x\!\!\int_0^y  \big[ \vp(x',y') -  
\vp(x,y) \big]\overline \kappa(\dd x', \dd y', x,y) v_s(\dd x,\dd y) \dd s \\[5pt]
& + \intot \intt  (d(x)- d(y))_+  \int_0^x \big[ \vp(x',y) - \vp(x,y) \big] \kappa(\dd x',x)v_s(\dd x,\dd y)  \dd s
 \\[5pt]
& + \intot \intt (d(y)- d(x))_+  \int_0^y  \big[ \vp(x,y') - \vp(x,y) \big]\kappa(\dd y',y)v_s(\dd x,\dd y) \dd s,
\end{align*}
where for each $x,y\in \R_+$, $\overline \kappa (\cdot,\cdot,x,y) \in {\mathcal H}(\kappa(\cdot, x),\kappa(\cdot,y))$ 
satisfies
$$\intt |x'-y'|\overline \kappa(\dd x', \dd y', x,y)=
W_1(\kappa(\cdot,x),\kappa(\cdot,y)).
$$

\section{Age and size structure}

Models have been proposed which use several stucture variables. For instance, 
age or size only are not enough to predict cell division.
But a combination of both (or other physiological variables as size 
increment) have been used, see~\cite{DHKR, DOR2020}, leading to write
\begin{equation} \left\{
\begin{array}{ll}
\dis \f{\p u_t(x,z) }{\p t}  + \f{\p u_t(x,z)}{\p x}+ \f{\p[g(z)\, u_t(x,z)]}{\p z} +
d(x,z) \, u_t(x,z) =  0, \quad & t\geq 0,\; x>0,\; z>0,
\\[8pt]
u_t(x, z=0)=0, & t\geq 0,\; x>0,\\[5pt]
\dis u_t(x=0, z) =  \int_{x=0}^\infty   \int_{z'=z}^\infty d(x', z') \kappa(z,z')
u_t(\dd x' , \dd z'), & t\geq 0,\; z>0
\end{array} \right.
\label{eq:AgeSizestruct}
\end{equation}
The state space is $ \cJ=[0,\infty)^2$. We assume that the coagulation kernel has
the specific form \eqref{tc:gfkernel}, that
\begin{equation}\label{rra}
g \in C([0,\infty)), \quad g \text{ is non-increasing}, \quad g(0) \geq 0,  \quad 
d \in C(\cJ), \quad d\geq 0.
\end{equation}
and that
\beq
\exists \; a>0 \quad \hbox{such that} \quad a \leq \Big(1 -    \int_0^1 r \beta(\dd r) \Big) 
 \inf_{|x-z|+| \wt x- \wt z|\leq a} \f{  | x- \wt x| +|z- \tilde z|  } { |d(x,z)-d( \wt x,  \wt z)|} 
\max (d(x,z), d( \wt x, \wt z)).
 \label{tc:ASstructas1Z}
 \eeq

\begin{theorem} Assume \eqref{tc:gfkernel}-\eqref{rra}-\eqref{tc:ASstructas1Z} and consider the cost
$$
\varrho(x,z, \wt x, \wt z) = \min( a, | x- \wt x| +|z- \tilde z| ).
$$
For any $u_0^1,u_0^2 \in \cP(\cJ)$, there exists a pair of weak measure solutions
$(u_t^1)_{t\geq 0},(u_t^2)_{t\geq 0} \subset \cP(\cJ)$ to \eqref{eq:AgeSizestruct}, starting from $u_0^1$ and
$u_0^2$, i.e., such that for $i=1,2$ and all $t\geq 0$, all $A\geq 1$,
\begin{gather}
\intot \int_\cJ  d(x,z)\Big[ \ind{x\geq A+1} + \ind{x \leq A+1,z\geq A+1}\int_0^{A/z}\beta(\dd r)\Big]
u^i_s(\dd x,\dd z) \dd s  <\infty \label{asww1}
\end{gather}
and for all $t\geq 0$, all $\varphi \in C^1_c(\cJ)$,
\begin{align}
\int_\cJ &  \varphi(x, z)u_t^i(\dd x, \dd z) = \int_\cJ \!\varphi(x,z)u_0^i(\dd x, \dd z) \notag
\\ 
&+ \intot \hskip-5pt \int_\cJ \!
\Big[\f{\p\varphi(x,z)}{\p x} + g(z) \f{\p\varphi(x,z)}{\p z}+ 
d(x,z)\! \int_0^1( \varphi(0,rz)-\varphi(x,z))\beta(\dd r)  \Big] u_s^i(\dd x, \dd z)\dd s.  \label{asww2}
\end{align}
Moreover, for all $t\geq 0$, we have 
$$
{\mathcal T}_\varrho (u^1_t, u^2_t) \leq {\mathcal T}_\varrho(u^1_0, u^2_0).
$$
\label{th:tcASstruct}
\end{theorem}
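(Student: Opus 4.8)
The plan is to rerun, almost word for word, the coupling argument of Section~\ref{sec:GF} for growth-fragmentation, the unit-speed ageing variable $x$ being handled exactly as the age variables of Sections~\ref{sec:RenewalEq} and~\ref{sec:severaltimes}. Fix $u_0^1,u_0^2\in\cP(\cJ)$ and $v_0\in\cah(u_0^1,u_0^2)$. I would first construct a family $(v_t)_{t\ge0}\subset\cP(\cJ^2)$, starting from $v_0$, solving weakly the coupled equation: for all $\vp\in C^1_c(\cJ^2)$ and all $t\ge0$ (writing $\vp$ for $\vp(x,z,\wt x,\wt z)$ and suppressing integration variables),
\begin{align*}
\intt\vp\,v_t =& \intt\vp\,v_0+\intot\hskip-5pt\intt\Big[\f{\p\vp}{\p x}+\f{\p\vp}{\p\wt x}+g(z)\f{\p\vp}{\p z}+g(\wt z)\f{\p\vp}{\p\wt z}\Big]\,v_s\,\dd s\\
&+\intot\hskip-5pt\intt(d(x,z)\wedge d(\wt x,\wt z))\int_0^1\big[\vp(0,rz,0,r\wt z)-\vp\big]\beta(\dd r)\,v_s\,\dd s\\
&+\intot\hskip-5pt\intt(d(x,z)-d(\wt x,\wt z))_+\int_0^1\big[\vp(0,rz,\wt x,\wt z)-\vp\big]\beta(\dd r)\,v_s\,\dd s\\
&+\intot\hskip-5pt\intt(d(\wt x,\wt z)-d(x,z))_+\int_0^1\big[\vp(x,z,0,r\wt z)-\vp\big]\beta(\dd r)\,v_s\,\dd s.
\end{align*}
Here the two coordinates fragment simultaneously at the common rate $d(x,z)\wedge d(\wt x,\wt z)$, with the \emph{same} ratio $r\sim\beta$ as in Section~\ref{sec:GF}, while otherwise the coordinate carrying the larger fragmentation rate fragments alone. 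Testing this equation with functions of $(x,z)$ only (resp.\ of $(\wt x,\wt z)$ only) and using $(d\wedge\wt d)+(d-\wt d)_+=d$, one checks that the marginals of $v_t$ solve \eqref{asww1}-\eqref{asww2} and that $v_t\in\cah(u_t^1,u_t^2)$ for every $t$.

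Existence of $(v_t)$ follows by the usual scheme: truncate $g,d$ to smooth bounded functions and pass to the limit via an a priori tightness estimate. By the de la Vallée Poussin theorem choose $h:[0,\infty)\to[0,\infty)$ strictly increasing to infinity, with $h(0)=0$, $0\le h'\le1$ and $\intt[h(x)+h(z)+h(\wt x)+h(\wt z)]v_0<\infty$. Testing the coupled equation with $\vp=h(x)+h(z)+h(\wt x)+h(\wt z)$, using $h(rz)\le h(z)$, $h(0)=0$ and the finiteness of $\overline C:=\sup_{z\ge0}g(z)h'(z)$ ($g$ continuous non-increasing, $g(0)\ge0$, $h'$ $[0,1]$-valued), one gets, with $H(z):=\int_0^1(h(z)-h(rz))\beta(\dd r)\ge0$,
\begin{align*}
\intt[h(x)+h(z)+h(\wt x)+h(\wt z)]v_t &+\intot\hskip-5pt\intt\big[d(x,z)(h(x)+H(z))+d(\wt x,\wt z)(h(\wt x)+H(\wt z))\big]v_s\,\dd s\\
&\le C+2(1+\overline C)\,t.
\end{align*}
Since $h(x)\ge h(A+1)\ind{x\ge A+1}$ and, for $z\ge A+1$, $H(z)\ge(h(A+1)-h(A))\int_0^{A/z}\beta(\dd r)$, this controls the quantity in \eqref{asww1} and provides the tightness required to pass to the limit.

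The contraction is the point of the theorem. Choose $v_0$ optimal for $\cT_\varrho(u_0^1,u_0^2)$ and apply the coupled equation with $\vp=\varrho$ (first to a smooth compactly supported approximation of $\varrho$, then letting the parameter go to $0$). One has $\f{\p\varrho}{\p x}+\f{\p\varrho}{\p\wt x}=0$ and, since $g$ is non-increasing,
$$g(z)\f{\p\varrho}{\p z}+g(\wt z)\f{\p\varrho}{\p\wt z}=\ind{|x-\wt x|+|z-\wt z|\le a}\,\sgn(z-\wt z)\big[g(z)-g(\wt z)\big]\le0,$$
so the transport contribution is non-positive, and one is left to show that $\Delta\le0$ everywhere, where
\begin{align*}
\Delta(x,z,\wt x,\wt z)=&-(d(x,z)\vee d(\wt x,\wt z))\,\varrho(x,z,\wt x,\wt z)+(d(x,z)\wedge d(\wt x,\wt z))\int_0^1\varrho(0,rz,0,r\wt z)\beta(\dd r)\\
&+(d(x,z)-d(\wt x,\wt z))_+\int_0^1\varrho(0,rz,\wt x,\wt z)\beta(\dd r)+(d(\wt x,\wt z)-d(x,z))_+\int_0^1\varrho(x,z,0,r\wt z)\beta(\dd r).
\end{align*}

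The sign check on $\Delta$ is the main obstacle — not a deep one, but the piece where assumption \eqref{tc:ASstructas1Z} is used, and it must be arranged as in \eqref{tc:gfcond}. Assume $d(x,z)\ge d(\wt x,\wt z)$, the reverse case being symmetric. If $|x-\wt x|+|z-\wt z|\ge a$, then the three fragmented costs are all $\le a=\varrho(x,z,\wt x,\wt z)$, whence $\Delta\le[-d(x,z)+d(\wt x,\wt z)+(d(x,z)-d(\wt x,\wt z))]\,a=0$. If $|x-\wt x|+|z-\wt z|\le a$, then $\varrho(0,rz,0,r\wt z)=r|z-\wt z|$ and $\varrho(0,rz,\wt x,\wt z)\le a$, so with $m:=\int_0^1r\beta(\dd r)\in[0,1)$,
\begin{align*}
\Delta&\le-d(x,z)(|x-\wt x|+|z-\wt z|)+d(\wt x,\wt z)\,m\,|z-\wt z|+(d(x,z)-d(\wt x,\wt z))\,a\\
&\le-(1-m)(d(x,z)\vee d(\wt x,\wt z))(|x-\wt x|+|z-\wt z|)+|d(x,z)-d(\wt x,\wt z)|\,a\le0,
\end{align*}
the last inequality being exactly \eqref{tc:ASstructas1Z} (the single-coordinate terms, each $\le a$, contribute the $|d(x,z)-d(\wt x,\wt z)|\,a$ part, while the simultaneous term is the only one producing the contraction factor $m<1$). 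Since $\cT_\varrho(u_t^1,u_t^2)\le\intt\varrho\,v_t\le\cT_\varrho(u_0^1,u_0^2)+\intot\hskip-5pt\intt\Delta\,v_s\,\dd s\le\cT_\varrho(u_0^1,u_0^2)$, the statement follows; beyond the bookkeeping above there is no genuinely new difficulty with respect to Section~\ref{sec:GF}.
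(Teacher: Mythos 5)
Your proposal is correct and follows essentially the same route as the paper: the same coupling equation on $\cJ^2$ (simultaneous fragmentation with a shared ratio $r\sim\beta$ at rate $d\wedge\wt d$, single fragmentation at rate $(d-\wt d)_+$ resp.\ $(\wt d-d)_+$), the same de la Vall\'ee Poussin tightness estimate, and the same sign check on $\Delta$ split on whether the $\min$ saturates at $a$, reducing in the end to \eqref{tc:ASstructas1Z}. The only cosmetic differences are that you spell out the growth term $g(z)\p_z\varrho+g(\wt z)\p_{\wt z}\varrho\le0$ that the paper relegates to ``the usual procedure,'' and your form of the Lyapunov weight $d(x,z)\bigl(h(x)+\int_0^1(h(z)-h(rz))\beta(\dd r)\bigr)$ corrects an evident $x$-vs-$z$ slip in the paper's display for $H(x,z)$.
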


Here again, \eqref{asww2} makes sense thanks to \eqref{asww1}: if $\varphi \in C_c(\cJ)$
is supported in $[0,A]^2$, then
\begin{align*}
d(x,z)\Big|\int_0^1(\varphi(0,rz)-\varphi(x,z))\beta(\dd r)  \Big|
\leq& \ind{x\leq A+1,z\leq A+1} 2 \|\varphi\|_\infty \sup_{[0,A+]^2} d + \ind{x\ge A+1}\|\varphi\|_\infty d(x,z) \\
&+ \ind{x\leq A+1,z\geq A+1} \|\varphi\|_\infty d(x,z)\int_0^{A/z} \beta(\dd r).
\end{align*}

\vip

\begin{proof} As before, we consider $u^1_0,u^2_0 \in \cP(\cJ)$ and a coupling $v_0 \in {\mathcal H}(u^1_0,u^2_0)$.
There exists a  family $(v_t)_{t\geq 0}$ of probability measures on~$\cJ^2$ such that, for all $t\geq 0$,
\begin{align}
\intot \int_\cJ  \Big(&d(x,z)\Big[ \ind{x\geq A+1} + \ind{x \leq A+1,z\geq A+1}\int_0^{A/z}\beta(\dd r)\Big] 
\notag\\
&+ d(\tx,\tz)\Big[ \ind{\tx\geq A+1} + \ind{\tx \leq A+1,\tz\geq A+1}\int_0^{A/\tz}\beta(\dd r)\Big]\Big)
v_s(\dd x,\dd z, \dd \tx, \dd \tz) \dd s  <\infty \label{ett1}
\end{align}
and that weakly solves 
\begin{align*}
\dis \f{\p}{\p t} v_t(x,z,\wt x ,\wt z) &+ \f{\p v_t}{\p x}+ \f{\p v_t}{\p \wt x} + \f{\p }{\p z} \big[g(z)\, v_t\big] + \f{\p v_t}{\p \wt z} \big[g(\wt z)\, v_t \big] +b(x,z)\vee d(\wt x, \wt z) \,v_t 
\\
&= \delta(x) \delta(\wt x)  \intt  (d(x', z')\wedge d(\wt x', \wt z'))  
 \frac{1}{z'  \wt z'} \beta \big( \frac{z}{ z'}\big)   \delta\big(\frac{z}{ z'}-  \frac{\wt z}{\wt z'}\big)    \, v_t(\dd x',\dd z', \dd \wt x', \dd \wt z')  
\\
&\quad + \delta(x)  \int  (d(x', z') -d(\wt x, \wt z))_+ \frac{1}{ z'} \beta \big( \frac{z}{ z'}\big)   \, v_t(\dd x', \dd z',  \wt x,  \wt z) 
\\
&\quad + \delta( \wt x) \int  (d( \wt x', \wt z') -d(x,z))_+ \frac{1}{ \wt z'} \beta \big( \frac{\wt z}{\wt z'}\big)   \, v_t(x,z,\dd \wt x',  \dd \wt z')
\end{align*}
This equation means that for all $t\geq 0$, for all $\vp \in C^1_c(\cJ^2)$, 
\begin{align}
\intt \vp&(x,z,\wt x, \wt z)  v_t(\dd x,\dd z,d \wt x ,d \wt z) = \intt \vp(x,z,\wt x, \wt z) 
v_0(\dd x,\dd z,d \wt x ,d \wt z)\notag\\
&+ \intot \hskip-5pt \intt \Big[\f{\p\varphi}{\p x} +\f{\p\varphi}{\p \wt x} + g(z) \f{\p\varphi}{\p z}
+ g(\wt z) \f{\p\varphi}{\p \wt z}\Big](x,z,\wt x, \wt z)\, v_s(\dd x,\dd z, d\wt x,d\wt z)  \dd s\notag \\
&+  \intot \hskip-5pt\intt  \Big[(d(x, z)\wedge d(\wt x, \wt z))  \int_0^1 [\vp (0, r z, 0,r \wt z )-\vp(x,z,\wt x, \wt z)] 
\, \beta(\dd r)\notag \\
& \hskip2cm +(d(x, z) -d(\wt x, \wt z))_+  \int_0^1 [\vp (0, r z, \wt x , \wt z )-\vp(x,z,\wt x, \wt z)] \, \beta(\dd r)
\label{ett2}\\
& \hskip2cm +(d( \wt x, \wt z) -d(x,z))_+ \int_0^1 [\vp (x, z, 0 , r \wt z) -\vp(x,z,\wt x, \wt z)]\, \beta(\dd r)
\Big]  \, v_s(\dd x,\dd z,d \wt x ,d \wt z) \dd s. \notag
\end{align}
The two marginals of $v_t$ solve \eqref{asww1}-\eqref{asww2}. Next,
one finds as usual that there is a function $h:[0,\infty)\to[0,\infty)$,
strictly increasing to infinity, such that 
\begin{align*}
\intt &[h(x)+h(z)+h(\tx)+h(\tz)]v_t(\dd x,\dd z,\dd \tx,\dd \tz) \\
&+ \intot \intt [d(x,z)H(x,z)+d(\tx,\tz)H(\tx,\tz)] v_s(\dd x,\dd z,\dd \tx,\dd \tz)\dd s
\leq C + 2(1+\overline{C}) t,
\end{align*}
where $H(x,z)=h(x)+ \int_0^1 (h(x)-h(rx))\beta(\dd r)$ and
$\overline{C}= \sup_{x \geq 0} g(x) h'(x)$. This {\it a priori} estimate is sufficient to construct a solution to 
\eqref{ett1}-\eqref{ett2}, because for any $A\geq 1$
$$
\ind{x\geq A+1} + \ind{x \leq A+1,z\geq A+1}\int_0^{A/z}\beta(\dd r)
\leq \frac{h(x)}{h(A+1)} + \int_0^1 \frac{h(x)-h(rx)}{h(A+1)-h(A)} \beta(\dd r) \leq 
\frac{H(x)}{h(A+1)-h(A)}.
$$

Following the usual procedure to prove the decay property, that we do not repeat again, it suffices,
to conclude the proof, to check that
\begin{align*}
(d(x,z)&\vee d(\wt x, \wt z)) \min( a, | x- \wt x|+|z- \tilde z| )
\\& \geq   (d(x, z)\wedge d(\wt x, \wt z)) \int_0^1  \min(a,r |z-\wt z|) \beta(\dd r) + a \,  | d(x, z) -d(\wt x, \wt z)|  .
\end{align*}
When the $\min$ on the left hand side is achieved by $a$ the inequality is obvious. Otherwise, we have to check that 
\begin{align*}
(d(x,z)\vee d(\wt x, \wt z)) \big[ | x- \wt x|+|z- \tilde z| \big] 
\geq \int_0^1  r  \beta(\dd r)  \,   (d(x, z)\wedge d(\wt x, \wt z)) |z-\wt z| + a \,  | d(x, z) -d(\wt x, \wt z)|  .
\end{align*}
This again is satisfied if 
\begin{align*}
\Big(1-  \int_0^1  r  \beta(\dd r) \Big) (d(x,z)\vee d(\wt x, \wt z)) \big[ | x- \wt x|+|z- \tilde z| \big] 
\geq a \,  | d(x, z) -d(\wt x, \wt z)|  ,
\end{align*}
which is the condition \eqref{tc:ASstructas1Z}, recall that we are in the case where  $| x- \wt x|+|z- \tilde z|\leq a$.
\end{proof} \qed

\section{Sexually structured populations}

Here  a female of type $x'$ mates with a male of type $x'_*$, chosen with  the probability $u_t(\cdot)$, 
the newborn is distributed with type $x$ according to the law $K(x; x',x'_*)$.  As often in this theory, 
we assume the distribution of males and females are identical, and rely on the formalism which can be 
found in~\cite{burger, MirRao, CGP2019} for instance. 
The (homogeneous)  model reads
\begin{equation}
\p_t u_t( x) +u_t( x) =  \intt_{\R^{2d}} K(x; x', x'_*) u_t( \dd x') u_t( \dd x'_*),
\qquad t \geq 0,\;x \in \R^d.
\label{eq:resgeneralform}
\end{equation}
For keeping the total population constant, the  kernel $K\geq 0$  satisfies 
$$
 \intt_{\R^d} K( \dd x ; x', x'_*) =1.
 $$
For instance, we can think of two extreme cases of either a Dirac concentration or a uniform distribution, 
\beq
K(x; x',x'_*) =\delta_{\theta x' +(1-\theta) x'_*}(x), \quad   \theta \in (0,1),  \quad   \text{or} \quad K(x; x',x'_*) = \f{1}{|x' - x'_*|} \ind{x \in (x',x'_*)}.
\label{tc:adsKex} 
\eeq
These distributions can be generalized to the form
\beq
K(\dd x; x',x'_*) = \int_{0}^1  \delta_{ x' \sigma+ x'_*(1-\sigma)} (x) h(\dd \sigma),
\label{tc:adsKex3} 
\eeq
with  $h$ a probability distribution on $[0,1]$ such that $\int_0^1  \sigma h(\dd \sigma) = \theta \in (0,1)$, 
which is the form we use in the sequel. 

\begin{theorem} With the notations and assumptions above, we choose, for some $p\geq 1$,
$$
\varrho(x,y) =  | x-y|^p. 
$$
For any $u_0^1,u_0^2 \in \cP_p(\R^d)$, there exists a pair of weak measure solutions
$(u_t^1)_{t\geq 0},(u_t^2)_{t\geq 0} \subset \cP_p(\R^d)$ to~\eqref{eq:resgeneralform}, starting from $u_0^1$ and
$u_0^2$, i.e., such that for $i=1,2$, all $t\geq 0$ and all $\varphi \in C^1_c(\R^d)$,
\begin{align}
\int_{\R^d}  & \varphi(x)u_t^i(\dd x) = \int_{\R^d} \!\varphi(x)u_0^i(\dd x) \notag \\
&+ \intot \hskip-5pt \inttt
\big[  \varphi(x)  - \theta \varphi(x')- (1-\theta) \varphi(x'_*) \big] K(\dd x; x',x'_*)  u_s^i(\dd x', \dd x'_*)\dd s.  \label{showw2}
\end{align}
Moreover, for all $t\geq 0$, we have 
$$
{\mathcal T}_\varrho (u_t^1, u_t^2) \leq {\mathcal T}_\varrho(u_0^1, u_0^2).
$$
\label{th:tcsex1}
\end{theorem}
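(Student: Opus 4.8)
The plan is to run the same coupling strategy as in the previous sections. The quadratic nonlinearity forces a new design choice — how the two parent pairs are matched inside the coupling — and the convexity of $t\mapsto|t|^p$ will play the role that the structural hypotheses on the death rate played in the linear examples, which is why no truncation of the cost is needed here.

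First I would build, for any $v_0\in\cH(u_0^1,u_0^2)$, a family $(v_t)_{t\ge0}\subset\cP_p(\R^d\times\R^d)$ starting from $v_0$ and solving weakly, for all $\vp\in C_c^1((\R^d)^2)$ and all $t\ge0$,
\begin{align*}
\intt\vp(x,y)\,v_t(\dd x,\dd y) &= \intt\vp(x,y)\,v_0(\dd x,\dd y) + \intot\intttt\int_0^1\Big[\vp\big(\sigma x'+(1-\sigma)x'_*,\ \sigma y'+(1-\sigma)y'_*\big)\\
&\quad - \theta\,\vp(x',y') - (1-\theta)\,\vp(x'_*,y'_*)\Big]\,h(\dd\sigma)\,v_s(\dd x',\dd y')\,v_s(\dd x'_*,\dd y'_*)\,\dd s.
\end{align*}
The coupling idea is that a linked pair $(x,y)$ is replaced, at rate $1$, by a child issued from two independently sampled linked parent pairs $(x',y')$ and $(x'_*,y'_*)$, the population-$1$ child being $\sigma x'+(1-\sigma)x'_*$ and the population-$2$ child being $\sigma y'+(1-\sigma)y'_*$, with the \emph{same} mixing parameter $\sigma\sim h$ and the \emph{same} matching of the two parent pairs. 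Existence follows from the usual approximation scheme (truncating the kernel, or the associated $N$-particle system), the tightness being supplied by the a priori estimate
$$
\intt(|x|^p+|y|^p)\,v_t(\dd x,\dd y)\le\intt(|x|^p+|y|^p)\,v_0(\dd x,\dd y),
$$
obtained by testing with $\vp(x,y)=|x|^p+|y|^p$ and using $\int_0^1|\sigma x'+(1-\sigma)x'_*|^p\,h(\dd\sigma)\le\theta|x'|^p+(1-\theta)|x'_*|^p$ (convexity of $t\mapsto|t|^p$, Jensen's inequality and $\int_0^1\sigma\,h(\dd\sigma)=\theta$), together with the analogue in the $y$ variable. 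Testing with a function depending on $x$ only (resp.\ on $y$ only) and integrating out the other pair of variables shows that the two marginals of $v_t$ solve \eqref{showw2} and stay in $\cP_p(\R^d)$; in particular $v_t\in\cH(u_t^1,u_t^2)$ for all $t\ge0$.

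For the contraction I would choose $v_0$ realising $\intt|x-y|^p\,v_0(\dd x,\dd y)=\cT_\varrho(u_0^1,u_0^2)$ and insert $\vp(x,y)=|x-y|^p$ into the coupled equation — legitimate after approximating $|x-y|^p$ by compactly supported cutoffs, the moment bound above guaranteeing uniform integrability of $|x-y|^p$ under $v_t$. For fixed parent pairs,
$$
\int_0^1\big|\sigma(x'-y')+(1-\sigma)(x'_*-y'_*)\big|^p\,h(\dd\sigma)\le\theta|x'-y'|^p+(1-\theta)|x'_*-y'_*|^p
$$
again by convexity of $t\mapsto|t|^p$; integrating against the product measure $v_s(\dd x',\dd y')\,v_s(\dd x'_*,\dd y'_*)$ and using that $v_s$ is a probability measure, the gain term is bounded by $\theta\intt|x'-y'|^p\,v_s+(1-\theta)\intt|x'_*-y'_*|^p\,v_s=\intt|x-y|^p\,v_s$, which exactly cancels the loss term, equal to $-\intt|x-y|^p\,v_s(\dd x,\dd y)$. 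Hence $\intt|x-y|^p\,v_t\le\cT_\varrho(u_0^1,u_0^2)$ for every $t\ge0$, and since $v_t\in\cH(u_t^1,u_t^2)$ we conclude $\cT_\varrho(u_t^1,u_t^2)\le\intt|x-y|^p\,v_t\le\cT_\varrho(u_0^1,u_0^2)$.

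The main obstacle, relative to the linear sections, is the construction of the coupled flow: there being no linear semigroup, the approximation argument must pass to the limit in the bilinear term, and this is exactly where the non-increasing $p$-th moment is used twice — once for tightness in $\cP_p$, once to legitimise the unbounded test function $|x-y|^p$. Everything else reduces to the convexity inequality for $|\cdot|^p$, so that no condition on the replacement rate and no truncation of $\varrho$ are required here.
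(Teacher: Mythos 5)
Your proof follows the paper's approach step for step: you build the same coupling kernel — both parent pairs are sampled jointly under $v_s\otimes v_s$ and the two children are formed with the \emph{same} mixing parameter $\sigma\sim h$ — and the cancellation between gain and loss, as well as the contraction, rest on the same pointwise convexity inequality for $t\mapsto|t|^p$ that the paper invokes. The coupling P.D.E. you write is precisely the paper's, with the explicit $\delta$-coupling inserted for $\overline K$.

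There is one real gap, and it is exactly where the paper is more careful. You claim that the bound $\int(|x|^p+|y|^p)\,v_t(\dd x,\dd y)\le\int(|x|^p+|y|^p)\,v_0(\dd x,\dd y)$ ``supplies tightness'' in $\cP_p$ and, later, that it ``guarantees uniform integrability of $|x-y|^p$ under $v_t$''. Neither claim is quite right: a uniform bound on $p$-th moments does not give tightness in $\cP_p(\R^d)$, nor uniform integrability of $|x-y|^p$ — you need a \emph{super}-$p$-moment control. The paper supplies it by applying the de la Vall\'ee Poussin theorem to $v_0$: one gets a function $h\to\infty$ with $x\mapsto h(x)|x|^p$ smooth and convex, and the very convexity argument you already use then shows $\int[h(x)|x|^p+h(y)|y|^p]\,v_t\le\overline C$ for all $t$. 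It is this strengthened, time-uniform bound that gives $\cP_p$-tightness of the approximations and makes the passage to the limit with the unbounded test function $|x-y|^p$ legitimate. Your proof should be amended to test with $h(x)|x|^p+h(y)|y|^p$ rather than with $|x|^p+|y|^p$. (A minor stylistic point: the inequality $\int_0^1|\sigma a+(1-\sigma)b|^p\,h(\dd\sigma)\le\theta|a|^p+(1-\theta)|b|^p$ is pointwise convexity followed by integration against $h$ — invoking Jensen is unnecessary and, read naively, points in the wrong direction.)
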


\begin{proof}
We use a coupling $ \overline K(x,y; x',x'_*,y', y'_*)$, to be chosen later, with the property
\[
 \int_{\R^d}  \overline K(x,\dd y; x',x'_*,y', y'_*) =K(x; x', x'_*) , \qquad  \int_{\R^d}  \overline K(\dd x,y; x',x'_*,y', y'_*)  = K(y; y', y'_*).
\]
Then, we introduce  the coupling equation
\[
  \p_t v_t( x,y) +v_t(x,y)= \intttt \overline K(x,y; x',x'_*,y', y'_*) v_t( \dd x',\dd y') v_t(\dd x'_*, \dd y'_*)  .
\]
This means that for all $t\geq 0$, for all $\vp \in C^1_c(\R^{2d})$,  
\beq \begin{aligned}
 \int_{\R^d} \vp(x,y) v_t(\dd x,\dd y)   =  \int_{\R^d} \vp(x,y) v_0(\dd x,\dd y) +& \intot  \hskip-5pt  \intttt  \big[ \vp(x,y)  -\theta \vp(x',y') - (1- \theta) \vp(x'_*,y'_*)  \big]  
 \\[5pt]
 & \overline K(\dd x ,\dd y; x',x'_*,y', y'_*) v_s( \dd x',\dd y') v_s(\dd x'_*, \dd y'_*) \dd s,
\end{aligned} 
\label{sspv}
\eeq
where $\theta \in (0,1)$ is the same as in \eqref{tc:adsKex3}. For existence, we have to check the 
tightness in $ \cP_p(\R^d)$. By the de la Vall\'ee Poussin theorem, there is a function 
$h:\R^d \to [0,\infty)$ such that
$\lim_{|x| \to \infty} h(x)=\infty$ and
$$
\overline C:=\intt [h(x)|x|^p+h(y)|y|^p] v_0(\dd x, \dd y)  <\infty.
$$
One can moreover choose $h$ smooth, satisfying $h(0)=0$ and such that
$x\mapsto h(x)|x|^p$ convex.  Choosing $h(x)|x|^p+h(y)|y|^p$ as a test function in \eqref{sspv}, 
we conclude the bound, for all $t\geq 0$,
\[
\intt [h(x)|x|^p+h(y)|y|^p] v_t(\dd x, \dd y)  \leq \overline C,
\]
because, by convexity, the second term in the right hand side of~\eqref{sspv} is non-positive.
\\

For the non-expansion property, we just have to show that the right hand side is non-positive 
for $\varrho(x,y) =  | x-y|^p $ (arguing again after truncation, regularization), if choosing
as coupling kernel 
\[
\overline K(\dd x ,\dd y; x',x'_*,y', y'_*)=  \int_{0}^1 h(\dd \sigma) 
\delta_{ \sg x'+ (1-\sg) x'_*} (x) \delta_{ \sg y'+ (1-\sg) y'_* } (y).
\]
The duality formula raises the condition
 \[
 \int_{0}^1  \big | \sigma x'+(1-\sigma) x'_* - \sigma   y'- (1-\sigma) y'_* \big |^p h(\dd \sigma)  \leq  \theta  |x'-y'|^p +  (1-\theta)  |x'_* -y'_* |^p
 \]
which, by convexity, is immediate.

\end{proof} \qed

\appendix
\section{Uniqueness of measure solutions}
The coupling method is most powerful when  the measure solutions are unique. This uniqueness problem, 
in particular for coefficients with low regularity, can lead to several deep developments, 
\cite{BCG2020, DDGW2019}. Here, we consider regular coefficients so that the Hilbert Uniqueness 
Method can be applied without difficulty both to the Structured Equations under consideration and to the coupled 
equations. We treat in details the example of the renewal equation, i.e.,~\eqref{renewalEq} when $b=\delta$.
We assume that
\beq
d \in C([0,\infty)), \qquad g(x) \in C^1_{b} (\R^+), \qquad g(0)\geq 0,
\label{as:rengen1}
\eeq

We define the weak solutions (or distributional solutions), as follows.

\begin{definition} A function $(u_t)_{t\geq 0} \subset  \cP(0,\infty)$ satisfies the renewal equation~\eqref{renewalEq} in the distribution sense,  if for all $T>0$ and all {\em test function} $\psi \in C^1_{\rm comp} \big([0,T] \times[0, \infty[\big)$ such that $\psi(x,T)\equiv 0$, we have
\[
\dis- \int_0^T  \hskip-5pt  \int_0^\infty \dis \left [  \f{\p \psi(x,t)}{\p t}  +g(x) \f{\p \psi(x,t)}{\p x}  -  d(x) \psi(x,t) + d(x)  \psi(0,t)  \right ] u_t(\dd x) \; \dd t 
=  \dis \int_0^\infty \psi(x,0) u_{0}(\dd x) .
\]
\label{def:rendis}
\end{definition}

\begin{theorem} [Well posednesss] We assume \eqref{as:rengen1}. There is a unique weak solution of 
the renewal equation~\eqref{renewalEq}.
\label{th:renexis}
\end{theorem}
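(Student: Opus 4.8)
\textbf{Existence} is already essentially in hand: applying Theorem~\ref{th:tc:bm} with $u^1_0=u^2_0=u_0$ produces a probability-measure solution of \eqref{renewalEq} satisfying \eqref{ww1}--\eqref{ww2}, and a routine manipulation with time-dependent test functions of product form $\varphi(x)\theta(t)$, followed by a density argument, shows it is a distributional solution in the sense of Definition~\ref{def:rendis}. The substance of the statement is \textbf{uniqueness}, and the plan is to run the classical duality argument (the Hilbert Uniqueness Method). Given two distributional solutions $u^1,u^2$ issued from the same $u_0$, set $w_t=u^1_t-u^2_t$, a bounded signed measure with $w_0=0$; by linearity of Definition~\ref{def:rendis}, for every $T>0$ and every admissible test function $\psi$ (that is, $\psi\in C^1_{\rm comp}([0,T]\times[0,\infty))$ with $\psi(\cdot,T)\equiv 0$),
\[
\int_0^T\!\!\int_0^\infty\Big[\f{\p\psi}{\p t}+g(x)\f{\p\psi}{\p x}-d(x)\psi(x,t)+d(x)\psi(0,t)\Big]w_t(\dd x)\,\dd t=0 .
\]
It therefore suffices to solve, for a rich enough family of right-hand sides $\Phi$, the backward adjoint problem
\[
\f{\p\psi}{\p t}+g(x)\f{\p\psi}{\p x}-d(x)\psi(x,t)+d(x)\psi(0,t)=\Phi(x,t),\qquad \psi(\cdot,T)\equiv 0 ,
\]
with an (essentially) admissible $\psi$: inserting such a $\psi$ above yields $\int_0^T\!\int_0^\infty\Phi\,w_t\,\dd t=0$ for all such $\Phi$, hence $w_t=0$ for almost every $t$, and then for every $t$ by the weak continuity of $t\mapsto u^i_t$ recorded after Theorem~\ref{th:tc:bm}.

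The construction of $\psi$ is the heart of the matter and I would carry it out by characteristics. Since $g\in C^1_b(\R^+)$, the ODE $\dot X=g(X)$ generates a global flow which is Lipschitz (indeed $C^1$) in the initial datum, and since $g(0)\ge 0$ the forward characteristic issued from any $x\ge0$ stays in $[0,\infty)$; along such a characteristic the adjoint equation becomes a scalar linear ODE, so Duhamel's formula represents $\psi(x,t)$ as an integral over $s\in[t,T]$ of $\Phi$ and of the still-unknown boundary trace $p(t):=\psi(0,t)$, with integrating factor $\exp\!\big(-\int_t^s d(X_{x,t}(r))\,\dd r\big)$. Evaluating this representation at $x=0$ turns it into a linear Volterra integral equation of the second kind for $p$ on $[0,T]$, well posed by a Picard iteration in a weighted supremum norm, because the characteristic issued from the boundary remains, for $s\in[t,T]$, in the compact set $[0,\|g\|_\infty T]$ on which the continuous function $d$ is bounded. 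This yields $p$ continuous (and $C^1$ when $\Phi$ is smooth), hence a bounded $\psi$. If one insists on genuine $C^1$ regularity of $\psi$ while $d$ is only continuous, one first replaces $d$ by a smooth approximation $d_\e\to d$ locally uniformly (with, say, $d_\e\le d+1$), solves the adjoint problem for $d_\e$, and carries along the error term $\int_0^T\!\int_0^\infty(d-d_\e)\big(\psi_\e-\psi_\e(0,t)\big)w_t\,\dd t$, which tends to $0$ as $\e\to0$ by dominated convergence, using the uniform boundedness of the $\psi_\e$ and the integrability \eqref{ww1}.

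The one genuine subtlety is that $\psi$ is \emph{not} compactly supported in $x$: for $x$ large the characteristic never meets the support of $\Phi$, yet the reinjection keeps $\psi(x,t)=\int_t^T e^{-\int_t^s d(X_{x,t}(r))\,\dd r}\,d(X_{x,t}(s))\,p(s)\,\dd s$ bounded but nonzero. I would remove this by truncation: let $\chi_R$ be a cutoff equal to $1$ on $[0,R]$, supported in $[0,2R]$, with $|\chi_R'|\le C/R$, and apply the formulation to the admissible test function $\psi\chi_R$; since $\chi_R(0)=1$,
\[
\f{\p(\psi\chi_R)}{\p t}+g\f{\p(\psi\chi_R)}{\p x}-d\,\psi\chi_R+d\,\psi(0,t)=\chi_R\Phi+(1-\chi_R)\,d(x)p(t)+g(x)\psi(x,t)\chi_R'(x).
\]
Inserting this in the (vanishing) formulation for $w$ and letting $R\to\infty$, the $g\psi\chi_R'$ term disappears because $g$ and $\psi$ are bounded, $|\chi_R'|\le C/R$ and $w_t$ has total mass $\le2$; the $(1-\chi_R)d\,p$ term disappears by dominated convergence using \eqref{ww1}; and $\chi_R\Phi=\Phi$ for $R$ large. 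One is left with $\int_0^T\!\int_0^\infty\Phi\,w_t\,\dd t=0$, and letting $\Phi$ range over $C^1_c((0,T)\times[0,\infty))$ (a determining class) gives $w\equiv0$, completing the proof. The main obstacle is thus entirely contained in the construction of the adjoint solution — the coupled Volterra equation for the boundary trace $\psi(0,\cdot)$, together with checking that the resulting $\psi$ is, up to the harmless mollification of $d$ and truncation in $x$, an admissible test function; the transport estimates and the limits are routine.
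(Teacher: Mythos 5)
Your proof follows the same duality (Hilbert Uniqueness Method) strategy as the paper: solve the backward adjoint problem~\eqref{eq:renewalad} by characteristics, reduce the boundary trace $\psi(0,\cdot)$ to a Volterra integral equation, insert the resulting $\psi$ in Definition~\ref{def:rendis}, and handle merely continuous $d$ by mollification with an error term $R_p$ controlled by $\|\psi_p\|_\infty$ and $\|d_p-d\|_\infty$. Your existence argument (Theorem~\ref{th:tc:bm} with $u^1_0=u^2_0$, then product test functions and density) differs cosmetically from the paper, which simply cites~\cite{BCG2020,DDGW2019}; both are standard. The genuine point of divergence is your observation that the adjoint solution is \emph{not} compactly supported in $x$, and here you are right and the paper is in error: Lemma~\ref{lm:renadjuni} asserts $\psi(x,t)=0$ for $x\ge R=\mathrm{supp}_x S+T\|g\|_\infty$, but formula~\eqref{eq:renadj1} gives $\psi(x,t)=\int_t^T[\psi(0,s)\wt d(s;x,t)+\wt S(s;x,t)]\dd s$, and while the $\wt S$-term does vanish for $x\ge R$, the reinjection term $\psi(0,s)\,\wt d(s;x,t)=\psi(0,s)\,d(X_s)e^{\int_s^t d(X_\sigma)\dd\sigma}$ does not, since $d$ is not assumed to have compact support and $\psi(0,\cdot)$ is generically nonzero. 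The paper's uniqueness argument relies on this false claim to treat $\psi$ as an admissible test function, so it has a gap; your cutoff argument with $\chi_R$ — using the boundedness of $\psi$ and $g$, the bound $|\chi_R'|\le C/R$, and the integrability $\int_0^T\!\int_0^\infty d(x)\,u^i_t(\dd x)\dd t<\infty$ (which is in any case required for Definition~\ref{def:rendis} to make sense, given the $d(x)\psi(0,t)$ term) — is a correct and essentially minimal repair, so your proposal is not only sound but strengthens the paper's proof.
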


For the existence part, we refer to \cite{BCG2020, DDGW2019} where more elaborate equations are treated. For uniqueness, need to study the inhomogeneous dual problem. We introduce a  source term $S(x,t)$ on a given time interval $[0,T]$ and
\begin{equation} \left\{
	\begin{array}{l}
- \f{\p}{\p t} \psi(x,t) - g(x) \f{\p}{\p x} \psi(x,t) + d(x) \, \psi(x,t) = \psi(0,t) d(x) + S(x,t), 
\\[5pt]
\psi( x,T)=0.
\end{array} \right.
\label{eq:renewalad}
\end{equation}
This problem is backward in $t$ and $x$, therefore it does not use a boundary condition at $x=0$.

\begin{lemma} [Existence for the dual problem]Assume \eqref{as:rengen1}, $S\in C^1_{\rm comp}\big([0,T) \times \R^+\big)$ and $d\in C^1(0,\infty)$, then there is a unique $C^1$ solution to the dual  equation \eqref{eq:renewalad}. Moreover $\psi(x,t)$ vanishes for  $x\geq R >0$ for some $R$ depending on the data and $T$, and the bound holds
$$
\dis \sup_{0\leq t \leq T, \, x \in \R^+} |\psi (x,t)| \leq C(T) \| S\|_\infty.
$$
\label{lm:renadjuni}
\end{lemma}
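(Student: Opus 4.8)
The plan is to solve the backward transport--reaction equation \eqref{eq:renewalad} by the method of characteristics, treating the boundary coupling term $\psi(0,t)d(x)$ and the source $S$ as part of a fixed-point problem. First I would introduce the backward characteristic flow: for the vector field $-g(x)\p_x$ running backward in time from $T$, let $X(s;x,t)$ denote the value at time $s\le t$ of the characteristic that equals $x$ at time $t$, i.e.\ $\f{\dd}{\dd s}X(s;x,t)=-g(X(s;x,t))$ on $s<t$... wait, one must be careful with signs: along forward characteristics $\dot x=g(x)$, equation \eqref{eq:renewalad} reads $-\f{\dd}{\dd s}\psi(X(s),s)+d(X(s))\psi(X(s),s)=\psi(0,s)d(X(s))+S(X(s),s)$ after evaluating along the curve. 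Since $g\in C^1_b$ with $g(0)\ge 0$, the flow is globally defined, $C^1$ in all variables, and characteristics starting at $x>0$ cannot reach $x=0$ in finite time if $g(0)=0$; if $g(0)>0$ they move away from $0$ going forward, so backward in time they may reach $0$, but this only helps since $\psi(x,T)=0$ is prescribed on the whole line. Integrating the ODE along characteristics from $s=t$ (where $\psi=0$ at $s=T$, and more generally we integrate from $T$ down) gives an explicit Duhamel-type representation
$$
\psi(x,t) = \int_t^T \!\exp\!\Big(-\!\int_t^\sigma \! d(X(\tau;x,t))\,\dd\tau\Big)\big[\psi(0,\sigma)+ \wt S(X(\sigma;x,t),\sigma)\big] d(X(\sigma;x,t))\,\dd\sigma + (\text{source term}),
$$
with the source term $\int_t^T \exp(\cdots) S(X(\sigma;x,t),\sigma)\,\dd\sigma$; here I am being schematic about exact limits but the structure is a linear Volterra equation.

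Next I would set up the fixed point. Plug $x=0$ into the representation to get a closed scalar Volterra integral equation for the unknown trace $\beta(t):=\psi(0,t)$ on $[0,T]$, of the form $\beta(t)=\int_t^T k(t,\sigma)\beta(\sigma)\,\dd\sigma + F(t)$ with $k$ bounded (since $d$ is continuous hence bounded on the relevant compact $x$-range, see below) and $F$ bounded and continuous because $S$ is $C^1_{\rm comp}$. A standard Picard iteration / Gronwall argument on this backward Volterra equation gives a unique continuous solution $\beta$ with $\|\beta\|_\infty \le C(T)\|F\|_\infty \le C(T)\|S\|_\infty$; then $\psi$ itself is recovered by the representation formula, and one checks it is $C^1$ by differentiating under the integral sign, using $d\in C^1$, $g\in C^1_b$, and the $C^1$ dependence of the flow on $(x,t)$. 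The bound $\sup|\psi|\le C(T)\|S\|_\infty$ is then immediate from the representation once $\|\beta\|_\infty$ is controlled. Uniqueness of the $C^1$ solution follows because any $C^1$ solution must satisfy the representation formula (integrate along characteristics), hence its trace solves the same Volterra equation, which has a unique solution.

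For the compact-support claim: since $S\in C^1_{\rm comp}([0,T)\times\R^+)$, there is $R_0$ with $\operatorname{supp}S\subset [0,T)\times[0,R_0]$. Because $g$ is bounded, say $|g|\le M$, any forward characteristic started at time $0$ from $x\le R_0$ stays in $[0,R_0+MT]$ on $[0,T]$; conversely a point $(x,t)$ with $x > R_0+MT$ has its entire backward characteristic (down to time $0$) lying above $R_0$, so in the representation both $\psi(0,\cdot)d(X(\cdot))$... here one must notice the subtlety that $\psi(0,\sigma)$ need not vanish even when the characteristic avoids the support of $S$ — but then $d(X(\sigma;x,t))$ is evaluated at large argument, which does not vanish in general. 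So the cleanest argument is: the only mechanism feeding $\psi$ is $S$ (directly) and the trace $\beta$ (through the term $\beta(\sigma)d(X(\sigma;x,t))$). The trace $\beta$ is supported in the $x=0$ slice, and a characteristic reaching $x=0$ backward from $(x,t)$ requires $x\le MT$ (travel distance bound), while reaching $\operatorname{supp}S$ requires $x\le R_0+MT$. Hence for $x\ge R:=R_0+MT+1$, the representation gives $\psi(x,t)=0$. I expect the main obstacle to be bookkeeping around the boundary behavior at $x=0$: making the characteristic picture rigorous when $g(0)>0$ (characteristics exiting the domain), and verifying that the resulting $\psi$ is genuinely $C^1$ up to $x=0$ rather than merely Lipschitz — this needs the $C^1$ assumptions on $g$, $d$, $S$ and careful differentiation of the Volterra solution.
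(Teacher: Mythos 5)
Your approach mirrors the paper's own: characteristics for the transport operator, a Duhamel representation, a closed Volterra equation for the trace $\beta(t)=\psi(0,t)$ obtained by setting $x=0$, then uniqueness and the sup bound from the Volterra equation, and $C^1$ regularity by differentiating the representation with respect to $(x,t)$. Up to notation, that is exactly how the paper proceeds.

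The compact-support step is where your argument breaks, and you were right to sense a subtlety there — but the fix you propose does not work. You claim that the contribution $\beta(\sigma)\,d(X(\sigma;x,t))$ is only active when the backward characteristic from $(x,t)$ reaches $x=0$. That would be correct if $\psi(0,t)$ were a boundary datum transported in from $x=0$, but in \eqref{eq:renewalad} the term $\psi(0,t)\,d(x)$ is a \emph{volume} source present at every $x$: the Duhamel formula therefore carries
$$
\int_t^T \beta(\sigma)\,d\big(X(\sigma;x,t)\big)\,e^{\int_\sigma^t d(X_\tau)\,\dd\tau}\,\dd\sigma
$$
for \emph{every} $(x,t)$, and this does not vanish for large $x$ unless $d$ itself does. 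A concrete counterexample: take $g\equiv0$, $d\equiv1$, and $S\in C^1_{\rm comp}$ with $S(0,\cdot)\not\equiv0$. Then $\beta(t)=\int_t^T S(0,s)\,\dd s\not\equiv0$, and for every $x$ outside $\mathrm{supp}\,S$ one computes $\psi(x,t)=\int_t^T e^{-(s-t)}\beta(s)\,\dd s$, which is nonzero and independent of $x$; so $\psi$ is \emph{not} compactly supported. Note that the paper's own one-line justification ("support of $S$ in $x$ plus $T\|g\|_\infty$") only tracks the characteristic relative to $\mathrm{supp}\,S$ and overlooks the very same $\beta(\sigma)\,d(\cdot)$ term, so the compact-support clause of the Lemma is in fact false as stated. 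What the representation genuinely delivers — and what your Volterra–Gronwall bound does establish — is boundedness of $\psi$; if one wants a legitimate test function for the duality argument, one must either weaken the admissible test class or truncate $\psi$ in $x$ and control the error using $\int d\,u_t<\infty$.
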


\begin{proof}
We use the method of characteristics based on the solution of the differential system parametrized by the Cauchy data $(x,t)$ which is fixed
$$\left\{
	\begin{array}{l}
\f{d}{ds} X_s = g(X_s), \quad 0 \leq s \leq T, 
\\[5pt]
X_t=x \geq 0.
\end{array} \right.
$$
It is well-posed  thanks to the Cauchy-Lipschitz theorem and $X_s \geq 0$ thanks to assumption $g(0) \geq 0$.  It might be useful to keep in mind that $X_s$ depends on $(x,t)$ and thus the notation $X_s \equiv X_s(x,t)$.
\\

Then, we set
$$
\wt \psi(s; x,t) = \psi(s,X_s) e^{\int_s^t d(\sg,X_\sg) \dd \sg}, \quad \wt d(s; x,t) =d(s,X_s) e^{\int_s^t d(\sg,X_\sg) \dd \sg} ,
$$
$$
 \wt S(s; x,t) =S(s,X_s) e^{\int_s^t d(\sg,X_\sg) \dd \sg} ,
$$
and ignore the parameter $(x,t)$ when the statements are clear enough.  We rewrite equation \eqref{eq:renewalad} as 
$$ \begin{array}{rl}
\f{d}{d s}\wt \psi(s) & = \big[ \f{\p}{\p t} \psi + g \f{\p}{\p x} \psi - d \, \psi \big] e^{\int_s^t d(\sg,X_\sg) \dd \sg}\Big|_{(s, X_s)} 
 \\ [10pt]
&= - \psi(0,s) \wt d(s) - \wt S(s),
\end{array} 
$$
Next, we integrate between $s=t$ and $s=T$, use the Cauchy data at $t=T$ and the identity $\wt \psi(t)=\psi(x,t)$, and we obtain
\beq 
  \psi(x,t) = \int_t^{T} \big[\psi( 0,s) \wt d(s;x,t) + \wt S(s;x,t) \big] \dd s.
\label{eq:renadj1}
\eeq
This integral equation can be solved first for $x=0$. Then, equation \eqref{eq:renadj1} is reduced to the  Volterra equation
$$
 \psi(0,t) = \int_t^{T} \big[ \psi(0, s) \wt d(s;0,t) + \wt S(s;0,t) \big] \dd s , \qquad 0\leq t \leq T
$$
which, thanks to the (backward) Cauchy-Lipschitz theorem,  has a unique solution   that vanishes for $t=T$.  By the $C^1$ regularity of the data, we also have $\psi(0,t)\in C^1([0,T])$.
\\

Since $\psi(0,t)$ is now known, formula \eqref{eq:renadj1} gives us the explicit form of the solution for all $(x,t)$. 
Notice that, in the compact support statement, $\wt \psi(x,t)$ vanishes for $x\geq R$ where $R$ denotes the size of the support of   $S$ in $x$, plus $T \|g\|_\infty$.  The uniform bound on $\psi$ also follows from formula~\eqref{eq:renadj1},

\end{proof} \qed

\bigskip

\begin{proof}{\bf [Uniqueness for the renewal equation.]}
With the help of the dual problem, we can use the Hilbert Uniqueness Method. The idea is simple: when the coefficients $d$, $g$ satisfy the assumptions of Lemma \ref{lm:renadjuni}, we can use the solution $\psi$ of \eqref{eq:renewalad} as a test function in the weak formulation of Definition~\ref{def:rendis}. For the difference $u=u^2-u^1$ between two possible solutions $u^2, \, u^1$ with the same initial data, we arrive at 
$$
 \dis \int_0^T  \hskip-5pt  \int_0^\infty  \left [  \f{\p \psi(x,t)}{\p t}  +g(x) \f{\p \psi(x,t)}{\p x}  -  d(x) \psi(x,t) + d(x) \psi(0,t) \right ] u_t(\dd x)  \dd t  =0.
$$
for $\psi(\cdot,\cdot)\in C^1$ which is the case when $d \in C^1$. Then, taking into account~\eqref{eq:renewalad},  we arrive at 
$$
 \dis \int_0^T  \hskip-5pt  \int_0^\infty  S(x,t) u_t(\dd x)  \dd t =0,
$$
for all $T>0$ and  all  functions $S\in C^1_{\rm comp}$, and this implies $u\equiv 0$.
\\

When $d$  is merely continuous, we consider a regularized family 
$d_p \to d$ where the convergence holds locally  uniformly. Then, for a given function $S\in C^1_{\rm comp}$, we solve \eqref{eq:renewalad} with $d_p$ in place of $d$ and call $\psi_p$ its solution (which is uniformly  bounded with compact support). Inserting it in the definition of weak solutions, we obtain
$$
 \dis \int_0^T  \hskip-5pt  \int_0^\infty S(x,t) u_t(\dd x) \dd t=  R_p,
$$ 
$$
 R_p =  \dis \int_0^T  \hskip-5pt  \int_0^\infty   [d_p- d(x)] [\psi_p(x,t) -\psi_p(0,t) ] u_t(\dd x)  \; \dd t ,
$$
and using that  $\psi_p$ is uniformly bounded, we deduce that 
$$
|R_p| \leq T\, \| \psi\|_\infty\,  \|d_p- d \|_\infty  {\;}_{\overrightarrow{\; p \rightarrow \infty \; }}\; 0.
$$
Therefore, we have recovered the identity $\dis \int_0^T \hskip-5pt \int_0^\infty S(x,t) u_t(\dd x) \dd t =0,$ for all functions $S\in C^1_{\rm comp}$, and this implies again $u\equiv 0$.
\\

This concludes the uniqueness result stated in Theorem \ref{th:renexis}.
\end{proof} \qed 

%
%
%

\bibliographystyle{siam}  
\bibliography{BibrefW}

\begin{thebibliography}{10}

\bibitem{AGSbook}
{\sc L.~Ambrosio, N.~Gigli, and G.~Savar\'{e}}, {\em Gradient flows in metric
  spaces and in the space of probability measures}, Lectures in Mathematics ETH
  Z\"{u}rich, Birkh\"{a}user Verlag, Basel, second~ed., 2008.

\bibitem{BCG2020}
{\sc V.~Bansaye, B.~Cloez, and P.~Gabriel}, {\em Ergodic behavior of
  non-conservative semigroups via generalized {D}oeblin's conditions}, Acta
  Appl. Math., 166 (2020), pp.~29--72.

\bibitem{BerryLG2016}
{\sc H.~Berry, T.~Lepoutre, and A.~M. Gonz\'{a}lez}, {\em Quantitative
  convergence towards a self-similar profile in an age-structured renewal
  equation for subdiffusion}, Acta Appl. Math., 145 (2016), pp.~15--45.

\bibitem{BertoinW2020}
{\sc J.~Bertoin and A.~R. Watson}, {\em The strong {M}althusian behavior of
  growth-fragmentation processes}, Ann. H. Lebesgue, 3 (2020), pp.~795--823.

\bibitem{bianchiniG2011}
{\sc S.~Bianchini and M.~Gloyer}, {\em An estimate on the flow generated by
  monotone operators}, Comm. Partial Differential Equations, 36 (2011),
  pp.~777--796.

\bibitem{BJM2005}
{\sc F.~Bouchut, F.~James, and S.~Mancini}, {\em Uniqueness and weak stability
  for multi-dimensional transport equations with one-sided {L}ipschitz
  coefficient}, Ann. Sc. Norm. Super. Pisa Cl. Sci. (5), 4 (2005), pp.~1--25.

\bibitem{burger}
{\sc R.~B{\"u}rger}, {\em {The mathematical theory of selection, recombination,
  and mutation}}, {Wiley Series in Mathematical and Computational Biology},
  John Wiley \& Sons, Ltd., Chichester, 2000.

\bibitem{CGM2019}
{\sc V.~Calvez, P.~Gabriel, and A.~Mateos~Gonz\'{a}lez}, {\em Limiting
  {H}amilton-{J}acobi equation for the large scale asymptotics of a
  subdiffusion jump-renewal equation}, Asymptot. Anal., 115 (2019), pp.~63--94.

\bibitem{CGP2019}
{\sc V.~Calvez, J.~Garnier, and F.~Patout}, {\em Asymptotic analysis of a
  quantitative genetics model with nonlinear integral operator}, J. \'{E}c.
  polytech. Math., 6 (2019), pp.~537--579.

\bibitem{CCDR}
{\sc J.~Chevallier, M.~J. C\'{a}ceres, M.~Doumic, and P.~Reynaud-Bouret}, {\em
  Microscopic approach of a time elapsed neural model}, Math. Models Methods
  Appl. Sci., 25 (2015), pp.~2669--2719.

\bibitem{CushingBook}
{\sc J.~M. Cushing}, {\em An introduction to structured population dynamics},
  vol.~71 of CBMS-NSF Regional Conference Series in Applied Mathematics,
  Society for Industrial and Applied Mathematics (SIAM), Philadelphia, PA,
  1998.

\bibitem{DDGW2019}
{\sc T.~Debiec, M.~Doumic, P.~Gwiazda, and E.~Wiedemann}, {\em Relative entropy
  method for measure solutions of the growth-fragmentation equation}, SIAM J.
  Math. Anal., 50 (2018), pp.~5811--5824.

\bibitem{dpl}
{\sc R.~J. DiPerna and P.-L. Lions}, {\em Ordinary differential equations,
  transport theory and {S}obolev spaces}, Invent. Math., 98 (1989),
  pp.~511--547.

\bibitem{Dob}
{\sc R.~L. Dobru\v{s}in}, {\em Vlasov equations}, Funktsional. Anal. i
  Prilozhen., 13 (1979), pp.~48--58, 96.

\bibitem{DHKR}
{\sc M.~Doumic, M.~Hoffmann, N.~Krell, and L.~Robert}, {\em Statistical
  estimation of a growth-fragmentation model observed on a genealogical tree},
  Bernoulli, 21 (2015), pp.~1760--1799.

\bibitem{DOR2020}
{\sc M.~Doumic, A.~Olivier, and L.~Robert}, {\em Estimating the division rate
  from indirect measurements of single cells}, Discrete Contin. Dyn. Syst. Ser.
  B, 25 (2020), pp.~3931--3961.

\bibitem{feller}
{\sc W.~Feller}, {\em On the integral equation of renewal theory}, Ann. Math.
  Statistics, 12 (1941), pp.~243--267.

\bibitem{ferretti}
{\sc L.~Ferretti, C.~Wymant, M.~Kendall, L.~Zhao, A.~Nurtay,
  L.~Abeler-D{\"o}rner, M.~Parker, D.~Bonsall, and C.~Fraser}, {\em Quantifying
  sars-cov-2 transmission suggests epidemic control with digital contact
  tracing}, Science, 368 (2020).

\bibitem{FL2016}
{\sc N.~Fournier and E.~L\"{o}cherbach}, {\em On a toy model of interacting
  neurons}, Ann. Inst. Henri Poincar\'{e} Probab. Stat., 52 (2016),
  pp.~1844--1876.

\bibitem{FoPe2021}
{\sc N.~Fournier and B.~Perthame}, {\em Transport distances for pdes: the
  coupling method}, EMS Surv. Math. Sci., 7 (2020), pp.~1--31.

\bibitem{GoMoPa}
{\sc F.~Golse, C.~Mouhot, and T.~Paul}, {\em On the mean field and classical
  limits of quantum mechanics}, Comm. Math. Phys., 343 (2016), pp.~165--205.

\bibitem{Ha}
{\sc M.~Hauray}, {\em Wasserstein distances for vortices approximation of
  {E}uler-type equations}, Math. Models Methods Appl. Sci., 19 (2009),
  pp.~1357--1384.

\bibitem{MaPu}
{\sc C.~Marchioro and M.~Pulvirenti}, {\em Mathematical theory of
  incompressible nonviscous fluids}, vol.~96 of Applied Mathematical Sciences,
  Springer-Verlag, New York, 1994.

\bibitem{MD_LN68}
{\sc J.~A.~J. Metz and O.~Diekmann}, {\em The dynamics of physiologically
  structured populations}, vol.~68 of Lecture Notes in Biomath., Springer,
  Berlin, 1986.

\bibitem{MirRao}
{\sc S.~Mirrahimi and G.~Raoul}, {\em {Dynamics of sexual populations
  structured by a space variable and a phenotypical trait}}, {Theoretical
  Population Biology}, {84} ({2013}), pp.~{87--103}.

\bibitem{MW2018}
{\sc S.~Mischler and Q.~Weng}, {\em Relaxation in time elapsed neuron network
  models in the weak connectivity regime}, Acta Appl. Math., 157 (2018),
  pp.~45--74.

\bibitem{Monmarche2015}
{\sc P.~Monmarch{\'e}}, {\em On {$\mathcal H^1$} and entropic convergence for
  contractive {PDMP}}, Electron. J. Probab., 20 (2015), pp.~Paper No. 128, 30.

\bibitem{NPT2018}
{\sc S.~Nordmann, B.~Perthame, and C.~Taing}, {\em Dynamics of concentration in
  a population model structured by age and a phenotypical trait}, Acta Appl.
  Math., 155 (2018), pp.~197--225.

\bibitem{PPS2010}
{\sc K.~Pakdaman, B.~Perthame, and D.~Salort}, {\em Dynamics of a structured
  neuron population}, Nonlinearity, 23 (2010), pp.~55--75.

\bibitem{P_birkhauser}
{\sc B.~Perthame}, {\em Transport equations in biology}, Frontiers in
  Mathematics, Birkh\"{a}user Verlag, Basel, 2007.

\bibitem{Santambrogio}
{\sc F.~Santambrogio}, {\em Optimal transport for applied mathematicians},
  vol.~87 of Progress in Nonlinear Differential Equations and their
  Applications, Birkh\"{a}user/Springer, Cham, 2015.

\bibitem{VTOT}
{\sc C.~Villani}, {\em Topics in optimal transportation}, vol.~58 of Graduate
  Studies in Mathematics, American Mathematical Society, Providence, RI, 2003.

\end{thebibliography}

\end{document}